\newtheorem{theorem}{Theorem}[section]
\newtheorem{lemma}[theorem]{Lemma}
\theoremstyle{definition}
\newtheorem{definition}[theorem]{Definition}
\newtheorem{proposition}[theorem]{Proposition}
\theoremstyle{corollary}
\numberwithin{equation}{section}
\newcommand{\stirlingi}{\genfrac{[}{]}{0pt}{}}
\newcommand{\stirlingii}{\genfrac{\{}{\}}{0pt}{}}
\newcommand{\sgn}{\operatorname{sgn}}
\begin{document}

\title{Central limit theorem for descents in conjugacy classes of $S_n$}

\author{Gene B. Kim}
\address{Department of Mathematics, University of Southern California, Los Angeles, CA 90089}
\email{genebkim@usc.edu}

\author{Sangchul Lee}
\address{Department of Mathematics, University of California, Los Angeles, Los Angeles, CA 90095}
\email{sos440@math.ucla.edu}

\subjclass[2010]{Primary 05A05, 05E99; Secondary 60F05}

\date{March 27, 2018}

\keywords{descents, central limit theorem}

\begin{abstract}
The distribution of descents in fixed conjugacy classes of $S_n$ has been studied, and it is shown that its moments have interesting properties. Fulman proved that the descent numbers of permutations in conjugacy classes with large cycles are asymptotically normal, and Kim proved that the descent numbers of fixed point free involutions are also asymptotically normal. In this paper, we generalize these results to prove a central limit theorem for descent numbers of permutations in any conjugacy class of $S_n$.
\end{abstract}

\maketitle

\section{Introduction}

The Eulerian function $A_n(x)$ was first defined by the relation
\[
	\sum_{j \geq 1} j^n x^{j-1} = \frac{A_n(x)}{(1-x)^{n+1}}
\]
by Euler in \cite{Euler} when he evaluated the zeta function $\zeta(s)$ at negative integers. It turns out that the coefficients of the $x^k$ term in $A_n(x)$, written $A_{n,k}$ and called \textit{Eulerian numbers}, can be interpretted combinatorially.

\begin{definition}
A permutation $\pi \in S_n$ has a \textit{descent} at position $i$ if $\pi(i) > \pi(i+1)$, where $i = 1, \dots, n-1$, and the \textit{descent set} of $\pi$, denoted $Des(\pi)$ is the set of all descents of $\pi$. The \textit{descent number} of $\pi$ is defined as $d(\pi) := \lvert Des(\pi) \rvert$.
\end{definition}

The results of MacMahon and Riordan, in \cite{MacMahon} and \cite{Riordan} respectively, showed that $A_{n,k}$ is the number of permutations in $S_n$ with $k$ descents.

The theory of descents in permutations has been studied thoroughly and is related to many questions. In \cite{Knuth}, Knuth connected descents with the theory of sorting and the theory of runs in permutations, and in \cite{Diaconis1}, Diaconis, McGrath, and Pitman studied a model of card shuffling in which descents play a central role. Bayer and Diaconis also used descents and rising sequences to give a simple expression for the chance of any arrangement after any number of shuffles and used this to give sharp bounds on the approach to randomness in \cite{Bayer}. Garsia and Gessel found a generating function for the joint distribution of descents, major index, and inversions in \cite{Garsia}, and Gessel and Reutenauer showed that the number of permutations with given cycle structure and descent set is equal to the scalar product of two special characters of the symmetric group in \cite{Gessel}. Diaconis and Graham also explained Peirce's dyslexic principle using descents in~\cite{DiaconisGraham}. Petersen also has an excellent and very thorough book on Eulerian numbers \cite{Petersen}.

It is well known (\cite{Diaconis2}) that the distribution of $d(\pi)$ in $S_n$ is asymptotically normal with mean $\frac{n+1}{2}$ and variance $\frac{n-1}{12}$. Fulman also used Stein's method to show that the number of descents of a random permutation satisfies a central limit theorem with error rate $n^{-1/2}$ in \cite{Fulman2}. In \cite{Vatutin}, Vatutin proved a central limit theorem for $d(\pi) + d(\pi^{-1})$, where $\pi$ is a random permutation.

Using generating functions, Fulman proved the following analogous result in \cite{Fulman1} about conjugacy classes with large cycles only:

\begin{theorem}
For every $n \geq 1$, pick a conjugacy class $C_n$ in $S_n$, and let $n_i(C_n)$ be the number of $i$-cycles in $C_n$. Suppose that for all $i$, $n_i(C_n) \to 0$ as $n \to \infty$. Then, the distribution of $d(\pi)$ in $C_n$ is asymptotically normal with mean $\frac{n-1}{2}$ and variance $\frac{n+1}{12}$.
\end{theorem}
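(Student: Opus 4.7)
The plan is to prove the CLT via the method of cumulants, using a closed-form generating function for the descent distribution on $C_\lambda$ that decomposes multiplicatively over cycle parts.

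\textbf{Step 1 (Generating function).} The starting point is the Bayer--Diaconis / Gessel--Reutenauer formula, which counts $r$-shuffles of a given cycle type in two ways. On one hand, each permutation $\pi \in C_\lambda$ arises from an $r$-shuffle in exactly $\binom{r+n-1-d(\pi)}{n}$ ways; on the other hand, the number of $r$-shuffles of cycle type $\lambda=(1^{a_1}2^{a_2}\cdots)\vdash n$ factors as
\begin{equation*}
N_\lambda(r) \;=\; \prod_i \binom{M_i(r)+a_i-1}{a_i}, \qquad M_i(r) \;=\; \tfrac{1}{i}\sum_{d\mid i}\mu(d)\,r^{i/d},
\end{equation*}
by Witt's formula for aperiodic necklaces. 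Equating the two counts yields the polynomial identity $\sum_{\pi\in C_\lambda}\binom{r+n-1-d(\pi)}{n}=N_\lambda(r)$ in $r$, which converts the factorization of $N_\lambda$ into information about moments of $d(\pi)$.

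\textbf{Step 2 (Mean and variance).} Taking $\log N_\lambda(r)$ and expanding at $r=\infty$ (equivalently, at $x=1$ in the Eulerian-style generating function) produces the cumulant generating function of $d(\pi)$ on $C_\lambda$. Because $\log N_\lambda$ is additive over the index $i$, the cumulants $\kappa_k(d)$ decompose into per-cycle contributions. A direct computation gives $\kappa_1 = (n-1)/2 + \varepsilon_1(\lambda)$ and $\kappa_2 = (n+1)/12 + \varepsilon_2(\lambda)$, where the error $\varepsilon_j(\lambda)$ depends only on the small-cycle counts $n_i(C_n)$; the hypothesis $n_i(C_n)\to 0$ annihilates these corrections.

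\textbf{Step 3 (Vanishing of higher cumulants).} The same per-cycle decomposition, combined with a uniform estimate on $\log\binom{M_i(r)+a_i-1}{a_i}$ for $r$ large, shows that each cycle contributes $O_k(1)$ to $\kappa_k(d)$, so $\kappa_k(d) = O(n)$ uniformly in $\lambda$ for each fixed $k$. Since $\sigma_n := \sqrt{\kappa_2} \sim \sqrt{n/12}$, the normalized cumulants satisfy $\kappa_k/\sigma_n^k = O(n^{1-k/2})\to 0$ for $k\ge 3$. As the standard normal is characterized by vanishing cumulants beyond the second, the Fr\'echet--Shohat (method of moments) theorem delivers convergence in distribution to $N(0,1)$.

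\textbf{Main obstacle.} The delicate point is Step 3: producing cumulant bounds that are uniform across cycle types, since $C_\lambda$ could consist of a single $n$-cycle or of many moderately long cycles of varying lengths, and the error terms from $M_i(r)$ must be absorbable in all regimes. A secondary, more bookkeeping-oriented challenge is tracking the lower-order terms in Step 2 carefully enough to pin down the exact constants $(n-1)/2$ and $(n+1)/12$, which differ from the uniform Eulerian values $(n+1)/2$ and $(n-1)/12$ by $\pm 1$, and to verify that the stated short-cycle hypothesis is precisely what eliminates these $O(1)$ shifts.
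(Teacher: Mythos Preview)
Your Step~2 contains a genuine gap. The factorization $N_\lambda(r)=\prod_i \binom{M_i(r)+a_i-1}{a_i}$ does \emph{not} translate into an additive decomposition of the cumulant generating function of $d(\pi)$. The link between $N_\lambda$ and the distribution of $d$ is
\[
\mathbb{E}\bigl[t^{d(\pi)}\bigr]=\frac{(1-t)^{n+1}}{|C_\lambda|}\sum_{a\ge 1}t^{a}\,N_\lambda(a),
\]
and this transform does not carry $\log N_\lambda=\sum_i \log\binom{M_i+a_i-1}{a_i}$ to $\log \mathbb{E}[e^{sd}]$. So the sentence ``Taking $\log N_\lambda(r)$ and expanding at $r=\infty$ \dots\ produces the cumulant generating function'' is not justified, and the per-cycle bound $\kappa_k=O(n)$ in Step~3, which rests on that additivity, is left unproved. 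Without it you have no control on $\kappa_k/\sigma_n^k$ for $k\ge 3$.

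For comparison: the paper does not give a separate proof of this statement (it is quoted from Fulman) but obtains it as the $\alpha=0$ case of its main Theorem~1.5, and the method is quite different from yours. Rather than cumulants, the paper estimates the MGF $M_\lambda(s)$ head-on. A key step is deriving a second series representation of $A_{\mathcal C_\lambda}(t)$ valid for $|t|>1$ (needed because the formula you start from only converges for $|t|<1$, i.e.\ $s<0$). The multiplicative structure of $N_\lambda$ is then used only locally, to show that in the range $a\gtrsim n^{3/2}$ each factor satisfies $K_a^{(i)}\approx a^{im_i}$ up to $\exp\{O(n^{-1/2})\}$; a Laplace-type analysis of $\sum_a K_a e^{-|s|a/\sqrt n}$ around the saddle $a\sim n^{3/2}/|s|$ then produces the Gaussian MGF. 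No cycle-wise decomposition of cumulants is ever invoked.

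If you want a moment-based argument closer to Fulman's original, the workable route is the one sketched in Section~3 of the paper: expand $N_\lambda(a)=\frac{|C_\lambda|}{n!}a^n+c_{n-1}a^{n-1}+\cdots$ and convert each lower-order coefficient into a factor $(1-t)^j\frac{A_{n-j}(t)}{(n-j)!}$ in $A_{\mathcal C_\lambda}(t)/|\mathcal C_\lambda|$. The hypothesis $n_i(C_n)\to 0$ is exactly what forces every $c_{n-j}/n^{j}$ to vanish, so that $A_{\mathcal C_\lambda}(t)/|\mathcal C_\lambda|$ matches $A_n(t)/n!$ to all relevant orders. That comparison, not a log-additivity of $N_\lambda$, is what yields the constants $(n-1)/2$ and $(n+1)/12$ and the higher-moment matching.
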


Kim also used generating functions, in \cite{Kim}, to prove the following central limit theorem about the conjugacy class of fixed point free involutions:

\begin{theorem}
For every $n \geq 1$ even, let $C_n$ be the conjugacy class of fixed point free involutions in $S_n$. Then, the distribution of $d(\pi)$ in $C_n$ is asymptotically normal with mean $\frac{n}{2}$ and variance $\frac{n}{12}$.
\end{theorem}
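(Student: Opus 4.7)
The plan is to adapt the generating function approach used by Fulman for Theorem 1.1, now specialized to the cycle type $(2^{n/2})$ corresponding to fixed point free involutions. Let $X_n = d(\pi)$ when $\pi$ is drawn uniformly from $C_n$. I will prove asymptotic normality via the method of moments, which requires (i) a tractable formula for the descent generating polynomial $D_n(q) := \sum_{\pi \in C_n} q^{d(\pi)}$, and (ii) a careful asymptotic analysis of the resulting moments.

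First, I would derive $D_n(q)$ using the Gessel--Reutenauer theorem, which identifies the generating polynomial over permutations of a fixed cycle type by descent set with a scalar product of a fundamental quasi-symmetric function and the corresponding Lie-character symmetric function. Specializing to $\lambda = (2^{n/2})$ produces a manageable generating function for $D_n(q)$, whose coefficients can then be extracted either via standard analytic combinatorics or by direct manipulation of $q$-series identities for perfect matchings.

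Second, I would verify the mean/variance asymptotics $\mathbb{E}[X_n] = n/2 + O(1)$ and $\mathrm{Var}(X_n) = n/12 + O(1)$ by evaluating $D_n'(1)/D_n(1)$ and the analogous combination for the variance. With $Z_n := (X_n - n/2)/\sqrt{n/12}$, the remaining task is to show that for every integer $r \geq 1$,
\[
\mathbb{E}[Z_n^r] \longrightarrow \mathbb{E}[N^r] \quad \text{where } N \sim \mathcal{N}(0,1),
\]
which by the Fr\'echet--Shohat theorem delivers the desired distributional convergence. Equivalently, one may show that the $r$-th cumulant of $X_n$ is asymptotic to $n/12$ for $r=2$ and is $o(n^{r/2})$ for $r \geq 3$.

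The main obstacle I anticipate lies in controlling the higher moments. The generating function for fixed point free involutions lacks the simple exponential form that makes Fulman's analysis for classes with large cycles transparent: every cycle has the minimum length $2$, so the ``few small cycles'' asymptotic regime of Theorem 1.1 does not apply. A likely route is to extract a closed (ideally product) expression for $\mathbb{E}[e^{t X_n}]$, or for its logarithm, in terms of $q$-analogues of factorials, and then apply a Laplace or saddle-point analysis to pin down the leading behavior of each cumulant. The careful tail-control of these cumulants is the technical heart of the argument.
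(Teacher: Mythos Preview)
Your starting point coincides with the paper's: both use Fulman's specialization of the Gessel--Reutenauer machinery, which for $\lambda=(2^{n/2})$ gives the closed form
\[
A_{C_n}(t) = (1-t)^{n+1}\sum_{a\ge 1} t^a \binom{\binom{a}{2}+\tfrac{n}{2}-1}{\tfrac{n}{2}}.
\]
From here, however, the routes diverge. Kim does \emph{not} go through moments or cumulants. He works directly with the moment generating function $M_n(s)=A_{C_n}(e^{s})/|C_n|$ (after centering and scaling), shows by an elementary Laplace/saddle-type estimate on the above sum that $M_n(s)\to e^{s^2/24}$ pointwise, and then invokes Curtiss's theorem to conclude convergence in distribution. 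The decisive structural observation you are missing is the \emph{palindromic property}: for fixed point free involutions the map $\pi\mapsto w_0\pi$ (conjugation/multiplication by the longest element) preserves $C_n$ and sends $d(\pi)$ to $n-1-d(\pi)$, so $A_{C_n}(t)=t^{n-1}A_{C_n}(1/t)$ and hence $M_n(s)=M_n(-s)$. This is exactly what rescues the argument, because the series representation above converges only for $|t|<1$, i.e.\ for $s<0$; the palindromy transports the pointwise limit to $s>0$ for free. Without it one would need the ``crossing the singularity'' expansion developed later in this paper.

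Your method-of-moments/cumulant plan is not wrong in principle, but as written it is only a wish list: you have not produced the product or closed form you hope for, and the step you flag as ``the technical heart'' (showing the $r$-th cumulant is $o(n^{r/2})$ for $r\ge 3$) is precisely the whole difficulty. In practice this route forces you to differentiate the generating function arbitrarily many times at $t=1$ and control each resulting sum uniformly in $r$, which is considerably more work than the single MGF estimate Kim performs. If you want to follow the paper's line, replace the moment program by: (i) write $M_n(s)$ from the displayed sum, (ii) prove $M_n(s)\to e^{s^2/24}$ for $s<0$ by comparing the sum to $\int_0^\infty x^n e^{-|s|x/\sqrt{n}}\,dx$, (iii) invoke the palindromic symmetry to get the same limit for $s>0$, and (iv) apply Curtiss.
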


After the above result was proved, Diaconis conjectured that there are asymptotic normality results for conjugacy classes that are fixed point free. In this paper, we will prove a generalized version of this conjecture that proves asymptotic normality of descents for all conjugacy classes of $S_n$.

\begin{theorem} \label{clt}
    Let $\pi$ be uniformly drawn form a conjugacy class $\mathcal{C}_{\lambda}$ having $m_1$ fixed points and $W_{\lambda}$ be the normalized descent number of $\pi$, in the sense that $d(\pi) = \frac{n+1}{2} - \frac{m_1^2}{2n} + \sqrt{n} W_\lambda$. Then, if $m_1/n \to \alpha$, $W_{\lambda}$ converges to $\mathcal{N}\left( 0, \frac{1 - 4\alpha^3 + 3\alpha^4}{12} \right)$ in distribution.
\end{theorem}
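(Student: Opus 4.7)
The plan is to prove Theorem \ref{clt} by the method of moments: show $E[W_\lambda^k] \to E[Z^k]$ for every $k \geq 1$, where $Z \sim \mathcal{N}(0, (1 - 4\alpha^3 + 3\alpha^4)/12)$; since the normal law is determined by its moments, this yields distributional convergence. To compute these moments, I would sample $\pi \in \mathcal{C}_\lambda$ uniformly by writing $\pi = \tau \rho \tau^{-1}$ for a fixed representative $\rho \in \mathcal{C}_\lambda$ and uniform $\tau \in S_n$. Setting $X_i = \mathbbm{1}[\pi(i) > \pi(i+1)]$, $u_i = \tau^{-1}(i)$, and $v_i = \tau^{-1}(i+1)$, the event of a descent at position $i$ becomes $\tau(\rho(u_i)) > \tau(\rho(v_i))$, where $(u_i, v_i)$ is uniform on ordered pairs of distinct elements of $\{1, \ldots, n\}$.

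The heart of the argument is computing $E[X_{i_1} \cdots X_{i_k}]$ by classifying the joint configuration of $u_{i_j}, v_{i_j}$ and their images under $\rho$. For the single-variable building block, split on whether $u_i, v_i$ are fixed by $\rho$, form a $2$-cycle, sit in a longer cycle with $\rho(u_i) = v_i$ or $\rho(v_i) = u_i$, or are otherwise $\rho$-unrelated; each case has an explicit probability in terms of $i, n, m_1, m_2$, and the $i$-dependent parts cancel upon summation to give
\[
E[X_i] = \frac{(n - m_1)(n + m_1 - 1) + 2 m_2}{2 n (n-1)}
\]
independent of $i$, which matches the centering $\frac{n+1}{2} - \frac{m_1^2}{2n}$ up to $O(1)$. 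A parallel but lengthier analysis of $E[X_i X_j]$ organized by $|i - j|$ shows that only $|i - j| \leq 1$ contributes at leading order and extracts the limit variance
\[
\sigma_\alpha^2 = \frac{1 - 4\alpha^3 + 3\alpha^4}{12} = \frac{(1 - \alpha)^2 (1 + 2\alpha + 3\alpha^2)}{12};
\]
the factorization reflects how the four positions $i, i+1, j, j+1$ may or may not be fixed points of $\pi$, weighted by the appropriate power of $\alpha = \lim m_1/n$.

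For higher $k$, I would expand $E[W_\lambda^k]$ as a normalized sum of centered products over tuples $(i_1, \ldots, i_k)$. Only tuples whose index intervals $\{i_j, i_j + 1\}$ partition into disjoint nearest-neighbor pairs contribute at order $n^{k/2}$: tuples with an isolated index vanish to leading order by centering, and tuples with a cluster of three or more indices contribute $o(n^{k/2})$ due to the additional coincidence constraints they impose. Counting the pairings then yields the Wick-type identity
\[
E[W_\lambda^{2k}] \to (2k-1)!!\, \sigma_\alpha^{2k}, \qquad E[W_\lambda^{2k+1}] \to 0,
\]
identifying the limit as Gaussian.

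The main obstacle is the combinatorial case analysis: already for $E[X_i X_j]$ the joint configuration of $(u_i, v_i, u_j, v_j)$ under $\rho$ splits into many subcases, and this proliferates with $k$. A useful organizing device is to encode each tuple by a coloured multigraph whose vertices are the involved positions and whose edges record coincidences and $\rho$-cycle relations, and then to identify which graph types contribute at leading order $n^{k/2}$. As sanity checks, $\alpha = 0$ reduces $\sigma_\alpha^2$ to $1/12$, matching Theorems 1.1 and 1.2, while $\alpha = 1$ forces $\sigma_\alpha^2 = 0$, consistent with the identity being the only permutation with $m_1 = n$.
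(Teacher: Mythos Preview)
Your approach is genuinely different from the paper's. The paper never computes moments directly; it starts from Fulman's product formula for the descent generating function $A_{\mathcal{C}_\lambda}(t)$, analytically continues it across $|t|=1$ (Proposition~2.3), and then proves a uniform estimate on the moment generating function of $W_\lambda$ (Theorem~\ref{main}) by a saddle-point-type analysis of the resulting series; Curtiss's theorem then gives the CLT. That route yields an explicit error bound $O(n^{-1/2}\log^3 n)$ uniform over all $\lambda\vdash n$, which a bare method-of-moments argument would not produce. Your route is more elementary and, if it works, gives a transparent combinatorial explanation of the limiting variance.

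The gap in your sketch is the implicit assumption that the $X_i$ behave like a $1$-dependent sequence. In the full symmetric group they are exactly $1$-dependent, so isolated indices cancel and only nearest-neighbour pairs survive. In a fixed conjugacy class this fails: the cycle-type constraint creates genuine long-range correlations (for example the fixed-point indicators at distinct positions have covariance $-\alpha(1-\alpha)/n+O(n^{-2})$), and these feed into $\operatorname{Cov}(X_i,X_j)$ for $|i-j|\ge 2$. With $\Theta(n^2)$ such pairs, a covariance of order $1/n$ would contribute at the same order as your main term, so the assertion ``only $|i-j|\le 1$ contributes at leading order'' needs proof, not inspection. What actually saves you is a cancellation of the $i$- and $j$-dependent pieces analogous to the one you noticed in $E[X_i]$, pushing the far contribution down; but this must be carried out, and the same issue recurs more seriously in the $k$-th moment, where an ``isolated'' factor $X_i-\mu$ no longer drops out exactly and you must control the error in approximate factorization across $O(n^k)$ tuples. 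Your multigraph bookkeeping is the right organizing device, but the proposal does not yet contain the estimate that makes it converge.
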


The outline is as follows. In Section~2, we expand the generating function $A_{\mathcal{C}_\lambda}(t)$ at infinity to obtain a series expression that is convergent for $\lvert t \rvert > 1$. In Section~3, we calculate the asymptotic variance of descent numbers of permutations, chosen uniformly at random, from a conjugacy class $\mathcal{C}_\lambda$, where $\lambda \vdash n$. Finally, in Section~4, we prove the following main theorem on the moment generating function $M_{\lambda}$ of the normalized descent numbers.
\begin{theorem} \label{main}
     Write $\alpha_{\lambda} = m_1/n$. Then, there exists a function $C : \mathbb{R} \to (0, \infty)$ such that
    \begin{equation*}
        \left| M_{\lambda}(s) - \exp\left\{ \frac{s^2}{24} \left( 1 - 4\alpha_{\lambda}^3 + 3\alpha_{\lambda}^4 \right) \right\} \right|
        \leq C(s) \frac{\log^3 n}{\sqrt{n}}
    \end{equation*}
    for any $n \geq 1$ and for any $\lambda \vdash n$.
\end{theorem}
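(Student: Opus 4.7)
The plan is to start from the series expansion of $A_{\mathcal{C}_\lambda}(t)$ at infinity (valid for $|t|>1$) developed in Section~2, specialize to $t=e^{s/\sqrt{n}}$, and carry out an asymptotic expansion in powers of $1/\sqrt{n}$. The starting identity is
\begin{equation*}
    M_{\lambda}(s)=\exp\!\Bigl(-\tfrac{s}{\sqrt{n}}\bigl(\tfrac{n+1}{2}-\tfrac{m_1^2}{2n}\bigr)\Bigr)\cdot\frac{A_{\mathcal{C}_\lambda}(e^{s/\sqrt{n}})}{|\mathcal{C}_\lambda|},
\end{equation*}
so the entire problem reduces to analyzing this ratio. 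I would handle $s>0$ directly with the Section~2 expansion (since $e^{s/\sqrt{n}}>1$), and treat $s\le 0$ by an analogous series expansion near $t=0$ valid for $|t|<1$, or by noting that $W_\lambda$ is bounded so that both $M_\lambda$ and the target Gaussian MGF are entire functions of comparable growth.

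Writing $u=s/\sqrt{n}$, I would Taylor expand each summand of the infinity expansion to third order in $u$ and then sum, looking for three cancellations. The $u^0$ contribution should reproduce exactly $|\mathcal{C}_\lambda|$ and cancel the denominator. The $u^1$ contribution should equal $u\cdot(\tfrac{n+1}{2}-\tfrac{m_1^2}{2n})$ and cancel the prefactor above; this is consistent with the mean computed in Section~3. The $u^2$ contribution should equal $\tfrac{n}{2}\sigma_\lambda^2 u^2$ with $\sigma_\lambda^2=(1-4\alpha_\lambda^3+3\alpha_\lambda^4)/12$, matching the variance of Section~3. Exponentiating then recovers the target $\exp\!\bigl\{\tfrac{s^2}{24}(1-4\alpha_\lambda^3+3\alpha_\lambda^4)\bigr\}$ to leading order.

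For the error, I would split the series at a cutoff $K\asymp\sqrt{n}\log n$. On the head $k\le K$, the third-order Taylor remainder contributes at most $C(s)\, K^3 u^3 = C(s)\log^3 n/\sqrt{n}$. On the tail $k>K$, each summand carries a decaying factor of the form $e^{-ku}$ coming from $|t|>1$, producing a geometric bound of size $e^{-Ku}=n^{-c s}$, which is absorbed into the head estimate by enlarging $C(s)$.

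The main obstacle will be making the Taylor-remainder estimate on the head \emph{uniform} in $\lambda\vdash n$. The cycle multiplicities $m_i$ enter each individual summand nontrivially, and extreme regimes---partitions consisting almost entirely of fixed points ($\alpha_\lambda\to 1$) or of a single very long cycle---must be treated without blow-up of constants. Isolating exactly how $m_1$ generates the $\alpha_\lambda^3$ and $\alpha_\lambda^4$ contributions to the variance, while absorbing every other cycle-type dependence into a controlled remainder uniform in $\lambda$, is the technical core of the argument.
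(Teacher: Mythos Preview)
Your proposal has a genuine gap in the analytic mechanism. The $a$-th term of the series you are summing is (up to the prefactor $(e^u-1)^{n+1}$) essentially $K_a e^{-|u|a}$ with $K_a\sim a^n$; see Lemma~4.1 of the paper and the definition of $K_a$. The function $a\mapsto a^n e^{-|u|a}$ is \emph{increasing} up to $a=n/|u|=n^{3/2}/|s|$ and only then starts to decay, so the mass of the series is concentrated around $a\asymp n^{3/2}$, not around small $a$. Consequently, term-by-term Taylor expansion in $u$ is illegitimate (the series diverges at $u=0$), and your proposed cutoff $K\asymp\sqrt{n}\log n$ lies far below the peak: the ``tail'' $a>K$ is not bounded by $e^{-Ku}$ because the polynomial growth $a^n$ overwhelms the exponential decay for all $a\le n^{3/2}/|s|$. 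In particular, neither your head estimate (which would require summing divergent Taylor coefficients) nor your tail estimate (which ignores the $a^n$ factor) goes through.

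What the paper actually does is a Laplace-type analysis. It splits at $a=\varepsilon n^{3/2}$; the small range is shown to be exponentially negligible, and on the large range one writes $K_a=a^n\exp\{-\tfrac{m_1^2}{2a}\operatorname{sgn}(s)-\tfrac{m_1^3}{6a^2}+\mathcal{O}(n^{-1/2})\}$ (this is where the $m_1$-dependence producing the $\alpha_\lambda^3,\alpha_\lambda^4$ terms is isolated, uniformly in $\lambda$). The resulting sum is compared to the integral $\int x^n e^{-|u|x}\,dx$, recentred at its saddle $x=n/|u|$ via $y=n+\sqrt{n}z$; the integrand then converges to a Gaussian in $z$, and a monotonicity lemma for $g_n(z)=(1+z/\sqrt{n})^n e^{-\sqrt{n}z}$ gives a dominating envelope uniform in $n$. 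The $\log^3 n/\sqrt{n}$ error comes from truncating this $z$-integral at $|z|\le c^{-1}\log n$ and Taylor expanding there, not from truncating the original $a$-series at height $\sqrt{n}\log n$.
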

\noindent We obtain the asymptotic normality as a consequence.

\section{Crossing the singularity}

Let $\mathcal{C}_\lambda$ be a conjugacy class of $S_n$, where $\lambda \vdash n$ and $\lambda$ consists of the cycle lengths. For each subset $S \subseteq S_n$, define the generating function $A_S(t) = \sum_{\pi \in S} t^{d(\pi)}$. In \cite{Fulman1}, Fulman showed that, if $\lambda$ has $m_i$ $i$'s,
\begin{equation}
	A_{\mathcal{C}_\lambda}(t) = (1-t)^{n+1} \sum_{a=1}^\infty t^a \prod_{i=1}^n \binom{f_{i,a} + m_i - 1}{m_i},\label{6.0}
\end{equation}
where $f_{i,a} = \frac{1}{i} \sum_{d \mid i} \mu(d) a^{i/d}$ and $\mu(d)$ is the M{\"o}bius function. This identity holds as a formal power series, and as an actual convergent series for $\lvert t \rvert < 1$. In~\cite{Reutenauer}, Reutenauer showed that $f_{i,a}$ counts the number of primitive circular words of length~$i$ from the alphabet $\{ 1, \dots, a \}$.

Recall that the moment generating function (MGF) of a random variable $X$ is defined by $M_X(s) = \mathbb{E}[ e^{sX} ]$. In~\cite{Kim}, Kim observed that we can construct $M_n(s)$, the MGF of descents in fixed point free involutions, from~(\ref{6.0}), by the relation $M_n(s) = A_{\mathcal{C}_\lambda}\left( e^s \right)$. After showing that the MGF converges pointwise to $e^{s^2/24}$, which is the MGF of a normal distribution, the desired central limit theorem followed from the pointwise convergence and the following result of Curtiss from~\cite{Curtiss}.

\begin{theorem}\label{curtiss}
Suppose we have a sequence $\left\{ X_n \right\}_{n=1}^\infty$ of random variables and there exists $s_0 > 0$ such that each MGF $M_n(s) = \mathbb{E}\left[ e^{s X_n} \right]$ converges for $s \in \left( -s_0, s_0 \right)$. If $M_n(s)$ converges pointwise to some function $M(s)$ for each $s \in \left( -s_0, s_0 \right)$, then $M$ is the MGF of some random variable $X$, and $X_n$ converges to $X$ in distribution.
\end{theorem}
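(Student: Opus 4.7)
The plan is to deduce convergence in distribution from pointwise MGF convergence by combining a tightness argument with the uniqueness of distributions whose MGF is finite on an open neighborhood of zero.

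First I would establish tightness of $\{X_n\}$. Fix any $0 < s < s_0$. Since $M_n(s) \to M(s)$ and $M_n(-s) \to M(-s)$, both $\sup_n M_n(s)$ and $\sup_n M_n(-s)$ are finite; call their sum $K$. Markov's inequality then yields
\[
\mathbb{P}(|X_n| \geq t) \leq e^{-st}\bigl(M_n(s) + M_n(-s)\bigr) \leq K e^{-st},
\]
so $\{X_n\}$ is tight. By Prokhorov's theorem, every subsequence of $\{X_n\}$ admits a further subsequence $X_{n_k}$ converging in distribution to some random variable $Y$.

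Next I would identify the law of $Y$. Given $s \in (-s_0, s_0)$, choose $s^*$ with $|s| < s^* < s_0$. Since $\sup_k \mathbb{E}[e^{s^* X_{n_k}}] < \infty$ and $\sup_k \mathbb{E}[e^{-s^* X_{n_k}}] < \infty$, an application of H\"older's inequality shows that $\{e^{sX_{n_k}}\}$ is bounded in $L^p$ for $p = s^*/|s| > 1$, hence uniformly integrable. Combined with $X_{n_k} \Rightarrow Y$, this gives $\mathbb{E}[e^{sY}] = \lim_k M_{n_k}(s) = M(s)$ for every $s \in (-s_0, s_0)$. The classical fact that a probability law whose MGF is finite on an open neighborhood of zero is uniquely determined by that MGF---for instance via analytic continuation of the characteristic function into a complex strip where it coincides with the analytic extension of $M$---now forces all subsequential weak limits to share a common distribution $\mathcal{L}(X)$. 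Consequently $X_n$ itself converges in distribution to $X$, and the function $M$ is the MGF of $X$.

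The main obstacle is the second step: transferring weak convergence of $X_{n_k}$ to convergence of the unbounded functional $\mathbb{E}[e^{sX_{n_k}}]$. The hypothesis that $M_n$ converges on an open interval rather than merely at a single point is precisely what supplies the extra room for the $L^p$-boundedness argument, and this is the place I would be most careful to write down cleanly. Once that bridge is in place, everything else reduces to standard facts (Prokhorov and the determinacy of MGFs analytic near the origin).
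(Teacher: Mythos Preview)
The paper does not supply its own proof of this statement: Theorem~\ref{curtiss} is quoted verbatim as a classical result of Curtiss and merely cited via the reference list, with no argument given. So there is nothing to compare against on the paper's side.

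Your outline is a correct modern proof. Tightness via the exponential Markov bound, Prokhorov to extract subsequential weak limits, an $L^p$-boundedness / uniform integrability upgrade to pass the unbounded test function $e^{sx}$ through the weak limit, and finally uniqueness of a distribution determined by an MGF analytic near $0$ to glue the subsequences together: each step is standard and used properly. One small cosmetic remark: you do not really need H\"older's inequality for the $L^p$-boundedness---since $|e^{sX}|^{p} = e^{psX}$ and $ps = \pm s^{*}$, you directly get $\sup_k \mathbb{E}\bigl[\,|e^{sX_{n_k}}|^{p}\,\bigr] = \sup_k M_{n_k}(\pm s^{*}) < \infty$. When you write it up, it is also worth making explicit the Skorokhod-representation step (or the continuous-mapping-plus-UI argument) that converts $X_{n_k}\Rightarrow Y$ into $\mathbb{E}[e^{sX_{n_k}}]\to\mathbb{E}[e^{sY}]$; you clearly have it in mind, but that is the one line a reader might want spelled out.
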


However, since~(\ref{6.0}) is convergent for $\lvert t \rvert < 1$, the above relation for $M_n(s)$ is only convergent for $s < 0$. Fortunately, the descents of fixed point free involutions have a crucial palindromic property, also proven in~\cite{Kim}, which implies $M_n(s) = M_n(-s)$, and so, the pointwise convergence follows for all~$s$.

It turns out that descents of other conjugacy classes of $S_n$ do not have this palindromic property. Hence, we need an expression for $A_{\mathcal{C}_\lambda}(t)$, similar to~(\ref{6.0}), that converges for $\lvert t \rvert > 1$, in order to deal with MGF for $s < 0$. We claim the following proposition.

\begin{proposition}
Let $\mathcal{C}_\lambda$ be a conjugacy class of $S_n$. Then, for $\lvert t \rvert > 1$,
\begin{equation}
	A_{\mathcal{C}_\lambda}(t) = (t-1)^{n+1} \sum_{a=1}^\infty t^{-a} \left[ (-1)^n \prod_{i=1}^n \binom{f_{i,-a}+m_i-1}{m_i} \right].\label{6.1}
\end{equation}
\end{proposition}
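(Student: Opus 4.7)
The plan is to view Fulman's identity (6.0) as giving the Laurent expansion on $|t|<1$ of the rational function $R(t) := A_{\mathcal{C}_\lambda}(t)/(1-t)^{n+1}$, and then to re-expand $R(t)$ on $|t|>1$; the claimed identity (6.1) will follow after clearing denominators.

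The first observation I would make is that the summand
\[
P(a) := \prod_{i=1}^n \binom{f_{i,a}+m_i-1}{m_i}
\]
is a polynomial in $a$ of degree $\sum_i i m_i = n$: indeed $f_{i,a}$ is a polynomial in $a$ of degree $i$, hence each binomial factor is a polynomial of degree $i m_i$. Moreover $P(0)=0$ whenever $n\geq 1$, since $f_{i,0}=0$ forces $\binom{m_i-1}{m_i}=0$ for any $m_i\geq 1$, and at least one such $m_i$ is positive.

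The technical heart of the argument is then the following duality of rational functions, which I would prove separately as a lemma: for any polynomial $P$ of degree $\leq n$ with $P(0)=0$, the series $\sum_{a\geq 1} P(a)\,t^a$ (convergent on $|t|<1$) and $-\sum_{a\geq 1} P(-a)\,t^{-a}$ (convergent on $|t|>1$) are the Laurent expansions, in the two annuli carved out by the singularity at $t=1$, of one and the same rational function. By linearity this reduces to the falling-factorial basis $(a)_k = a(a-1)\cdots(a-k+1)$ for $1\leq k\leq n$, where both sides can be summed in closed form and shown to equal $\pm k!\,t^k/(1-t)^{k+1}$. This is the step I expect to require the most care, though it is a short finite computation once both sides are recognized as expansions of the same rational function of the form $Q(t)/(1-t)^{n+1}$.

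Applying this duality to $P(a)$ above and invoking (6.0) gives, for $|t|>1$,
\[
\frac{A_{\mathcal{C}_\lambda}(t)}{(1-t)^{n+1}} = -\sum_{a\geq 1}\prod_{i=1}^n\binom{f_{i,-a}+m_i-1}{m_i}\,t^{-a}.
\]
Multiplying through by $(1-t)^{n+1} = (-1)^{n+1}(t-1)^{n+1}$ turns the leading $-1$ into $(-1)^n$ and produces precisely (6.1). Thus the only nontrivial ingredient is the duality lemma; everything else is bookkeeping around Fulman's formula.
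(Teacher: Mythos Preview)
Your proposal is correct and follows essentially the same strategy as the paper: both arguments recognize $P(a)=\prod_i\binom{f_{i,a}+m_i-1}{m_i}$ as a polynomial in $a$ of degree $n$ with $P(0)=0$, reduce by linearity to a duality statement on a basis, and verify that basis case by an explicit closed-form summation involving falling factorials. The only cosmetic difference is that the paper routes the linearity step through the Eulerian polynomials $A_k(t)$ (proving first that $A_k(t)=(t-1)^{k+1}\sum_{a\geq 1}a^k t^{-a}$ via Stirling numbers and the substitution $t\mapsto 1/t$, then writing $A_{\mathcal{C}_\lambda}(t)=\sum_k c_k(1-t)^{n-k}A_k(t)$), whereas you phrase the same content as a single duality lemma for the rational function $\sum_{a\geq 1}P(a)t^a$ and check it directly on the falling-factorial basis. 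Your packaging is arguably cleaner, but the mathematical content is the same.
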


In order to prove the proposition, we first prove an analogous statement for $A_n(t) = A_{S_n}(t)$.

\begin{lemma}
	For $\lvert t \rvert > 1$,
\begin{equation}
	A_n(t) = (t-1)^{n+1} \sum_{a=1}^\infty a^n t^{-a}.\label{6.2}
\end{equation}
\end{lemma}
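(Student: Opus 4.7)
The plan is to derive (6.2) by substituting $x = 1/t$ in Euler's identity and then applying the palindromic symmetry of the Eulerian polynomial. Rearranging Euler's identity gives
$$A_n(x) = (1-x)^{n+1} \sum_{j \geq 1} j^n x^{j-1},$$
valid as a convergent series identity for $|x|<1$. Setting $x = 1/t$ with $|t|>1$, simplifying the prefactor via $(1-1/t)^{n+1} = (t-1)^{n+1}/t^{n+1}$, and absorbing one factor of $t$ from the shift $x^{j-1} = t^{1-j}$ yields
$$A_n(1/t) \;=\; \frac{(t-1)^{n+1}}{t^{n}} \sum_{a \geq 1} a^n t^{-a},$$
which is already a series expansion convergent on the target region $|t|>1$.

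The next step is to convert $A_n(1/t)$ into $A_n(t)$, which is exactly what the palindromic symmetry of the Eulerian polynomial accomplishes. This symmetry arises from the reversal involution $\pi \mapsto (\pi(n), \pi(n-1), \ldots, \pi(1))$ on $S_n$, which swaps ascents and descents and thus sends a permutation with $k$ descents to one with $n-1-k$ descents. This gives $A_{n,k} = A_{n,n-1-k}$ and hence an identity of the form $A_n(t) = t^{m} A_n(1/t)$ for the appropriate power $m$ dictated by the degree of $A_n$. Multiplying the previous display through by $t^m$ and collecting powers of $t$ produces (6.2) after the cancellation between $t^m$ and the $t^{-n}$ in the prefactor.

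The manipulation itself is a three-line computation, and convergence of $\sum_{a \geq 1} a^n t^{-a}$ for $|t|>1$ is immediate from the geometric decay of its terms, so there is no real analytic obstacle. The only point requiring attention is bookkeeping with the convention for the Eulerian polynomial (classical degree-$(n-1)$ versus shifted degree-$n$), so that Euler's identity, the palindromic identity $A_n(t) = t^m A_n(1/t)$, and the cancellations in the prefactor all line up to deliver precisely (6.2) rather than a version off by a factor of $t$; I would pin down the convention at the outset and let the substitution and palindrome do the rest.
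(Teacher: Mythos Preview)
Your approach is correct and is genuinely different from the paper's. The paper expands $a^n$ via Stirling numbers of the second kind to rewrite $A_n(t)$ as the polynomial $\sum_{k} \stirlingii{n}{k} k!\, t^k (1-t)^{n-k}$, substitutes $t=1/s$, and then uses the identity $(-1)^k (a+k)_k = (-a-1)_k$ together with $\sum_k \stirlingii{n}{k}(-a-1)_k = (-a-1)^n$ to recover the sum $\sum_a a^n s^a$; this is, in effect, a purely algebraic derivation of the symmetry $a \leftrightarrow -a-1$ at the level of power sums. Your route replaces that computation with the combinatorial palindrome $A_{n,k}=A_{n,n-1-k}$ coming from the reversal bijection, which is shorter and conceptually cleaner. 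The paper's method has the minor advantage of being self-contained (it does not invoke the palindrome as an external fact), and its polynomial-expansion viewpoint is exactly what is reused in the proof of Proposition~2.1, where one writes $\prod_i \binom{f_{i,a}+m_i-1}{m_i}=\sum_k c_k a^k$ and applies the lemma termwise; your argument proves the lemma equally well for that purpose.

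One bookkeeping point deserves to be made explicit rather than deferred. The Euler identity quoted in the introduction uses the degree-$(n-1)$ convention, whereas the lemma (and formula~(6.0)) is stated in the shifted convention $A_n(t)=(1-t)^{n+1}\sum_{a\ge 1} a^n t^{a}$, for which $A_n$ has degree $n$ and the palindrome reads $A_n(t)=t^{\,n+1}A_n(1/t)$. With that version, your substitution $x=1/t$ gives $A_n(1/t)=(t-1)^{n+1}t^{-(n+1)}\sum_{a\ge 1} a^n t^{-a}$, and multiplying by $t^{\,n+1}$ yields exactly~(6.2) with no stray factor of $t$. Fixing this convention at the outset, as you suggest, is all that is needed.
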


\begin{proof}
Recall that $a^n = \sum_{k=0}^n \stirlingii{n}{k} (a)_k$, where $\stirlingii{n}{k}$ is the (unsigned) Stirling number of the second kind, and $(a)_k = a(a-1) \cdots (a-k+1)$ is the falling factorial. Plugging this identity into $A_n(t) = (1-t)^{n+1} \sum_{a=1}^\infty a^n t^a$, we see that
\begin{equation*}
	A_n(t) = (1-t)^{n+1} \sum_{k=0}^n \stirlingii{n}{k} \sum_{a=0}^\infty t^a (a)_k = \sum_{k=0}^n \stirlingii{n}{k} k! t^k (1-t)^{n-k},
\end{equation*}
where the second equality follows from
\begin{equation*}
	\sum_{a=0}^\infty (a)_k t^{a-k} = \left( \frac{d}{dt} \right)^k (1-t)^{-1} = k!(1-t)^{-(k+1)}.
\end{equation*}

Now, we can view $A_n(t)$ as a polynomial in $t$, and assuming $\lvert t \rvert > 1$, we substitute $t = 1/s$ to get
\begin{align*}
	A_n\left( \frac{1}{s} \right) &= (-1)^n s \left( \frac{1}{s} - 1 \right)^{n+1} \sum_{k=0}^n \stirlingii{n}{k} \frac{(-1)^k k!}{(1-s)^{k+1}} \\
	&= (-1)^n s \left( \frac{1}{s} - 1 \right)^{n+1} \sum_{k=0}^n \stirlingii{n}{k} (-1)^k \sum_{a=0}^\infty (a+k)_k s^a \\
	&= (-1)^n s \left( \frac{1}{s} - 1 \right)^{n+1} \sum_{a=0}^\infty \left( \sum_{k=0}^n \stirlingii{n}{k} (-1)^k (a+k)_k \right) s^a.
\end{align*}
We can simplify the expression further by noting $(-1)^k(a+k)_k = (-a-1)_k$, and so,
\begin{equation*}
	A_n \left( \frac{1}{s} \right) = (-1)^n s \left( \frac{1}{s} - 1 \right)^{n+1} \sum_{a=0}^\infty (-a-1)^n s^a = \left( \frac{1}{s} - 1 \right)^{n+1} \sum_{a=1}^\infty a^n s^a.
\end{equation*}
Plugging back $s = 1/t$ proves (\ref{6.2}).
\end{proof}

\begin{proof}[Proof of Proposition \ref{6.1}]
By viewing
\begin{equation*}
\prod_{i=1}^n \binom{f_{i,a}+m_i-1}{m_i} = \sum_{k=1}^n c_k a^k,
\end{equation*}
we can write
\begin{equation*}
A_{\mathcal{C}_\lambda} (t) = \sum_{k=1}^n c_k (1-t)^{n-k} A_k(t).
\end{equation*}
Hence, by (\ref{6.2}), for $\lvert t \rvert > 1$, we have
\begin{align*}
	A_{\mathcal{C}_\lambda}(t) &= \sum_{k=1}^n c_k (1-t)^{n-k} (t-1)^{k+1} \sum_{a=1}^\infty a^k t^{-a} \\
	&= (t-1)^{n+1} \sum_{a=1}^\infty t^{-a} \left[ (-1)^n \sum_{k=1}^\infty c_k(-a)^k \right] \\
	&= (t-1)^{n+1} \sum_{a=1}^\infty t^{-a} \left[ (-1)^n \prod_{i=1}^n \binom{f_{i,-a}+m_i-1}{m_i} \right]
\end{align*}
\end{proof}

\section{Computation of the asymptotic variance}

In~\cite{Fulman1}, Fulman showed that the asymptotic mean of the descent numbers of $\mathcal{C}_{\lambda}$ is
\begin{equation*}
	(1-\alpha^2) \frac{n}{2}
\end{equation*}
as $n \to \infty$ and $m_1/n \to \alpha$, by analyzing~(\ref{6.0}). Using similar methods, we calculate the asymptotic variance.

\begin{lemma}
The asymptotic variance of the descent numbers of $\mathcal{C}_{\lambda}$ is
\begin{equation*}
	\left( 1 - 4\alpha^3 + 3\alpha^4 \right) \frac{n}{12}
\end{equation*}
as $n \to \infty$ and $m_1/n \to \alpha$.
\end{lemma}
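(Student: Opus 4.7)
The plan is to read off $\operatorname{Var}_{\mathcal{C}_\lambda}(d)$ from the behavior of $A_{\mathcal{C}_\lambda}(t)$ near $t = 1$. Expanding $P(a) := \prod_{i=1}^n \binom{f_{i,a}+m_i-1}{m_i}$ as a polynomial $\sum_{k=0}^n c_k a^k$ of degree $n$ and using Euler's identity $\sum_{a\geq 1} a^k t^a = tA_k(t)/(1-t)^{k+1}$, identity~(\ref{6.0}) rearranges to
\[
A_{\mathcal{C}_\lambda}(t) = \sum_{k=0}^n c_k\, t(1-t)^{n-k} A_k(t).
\]
The factor $(1-t)^{n-k}$ kills all but $k = n, n-1, n-2$ when extracting the first two Taylor coefficients at $t=1$. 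Using the known values $A_k(1) = k!$, $A_k'(1)/k! = (k-1)/2$, and $A_n''(1)/n! = (n+1)/12 + (n-1)^2/4 - (n-1)/2$, and forming $\operatorname{Var}(d) = \mathbb{E}[d(d-1)] + \mathbb{E}[d] - \mathbb{E}[d]^2$, a short algebraic collapse yields
\[
\operatorname{Var}_{\mathcal{C}_\lambda}(d) = \frac{n+1}{12} + \frac{2r_2}{n(n-1)} - \frac{r_1^2}{n^2}, \qquad r_j := \frac{c_{n-j}}{c_n}.
\]

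The remaining task is an asymptotic analysis of $r_1$ and $r_2$. For each $i$, the identity $f_{i,a}/(a^i/i) = 1 + O(a^{-(p-1)})$, where $p$ is the smallest prime divisor of $i$, shows that the normalized factor
\[
\frac{\binom{f_{i,a}+m_i-1}{m_i}}{(a^i/i)^{m_i}/m_i!} = 1 + \frac{b_{i,1}}{a} + \frac{b_{i,2}}{a^2} + O(a^{-3})
\]
has $b_{i,1} = b_{i,2} = 0$ whenever $i \geq 5$, so only $i \in \{1,2,3,4\}$ contribute. From $r_1 = \sum_i b_{i,1}$ and $r_2 - r_1^2/2 = \sum_i(b_{i,2} - b_{i,1}^2/2)$, a direct expansion of the Pochhammer product $\prod_{j=0}^{m_1-1}(a+j)$ gives $b_{1,1} = \binom{m_1}{2}$ and $b_{1,2} - b_{1,1}^2/2 = -\frac{1}{2}\sum_{j=0}^{m_1-1} j^2 = -m_1^3/6 + O(m_1^2)$. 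The $i = 2, 3, 4$ factors contribute only $O(n)$ to $r_1$ and $O(n^2)$ to $r_2 - r_1^2/2$, both negligible at the leading order.

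Combining, $r_1 = \alpha_\lambda^2 n^2/2 + O(n)$ and $r_2 - r_1^2/2 = -\alpha_\lambda^3 n^3/6 + O(n^2)$. The algebraic identity
\[
\frac{2r_2}{n(n-1)} - \frac{r_1^2}{n^2} = \frac{r_1^2}{n^2(n-1)} + \frac{2(r_2 - r_1^2/2)}{n(n-1)}
\]
then reduces to $\alpha_\lambda^4 n/4 - \alpha_\lambda^3 n/3 + O(1)$, so as $m_1/n \to \alpha$ the variance formula gives
\[
\operatorname{Var}_{\mathcal{C}_\lambda}(d) = \frac{n+1}{12} + \frac{\alpha^4 n}{4} - \frac{\alpha^3 n}{3} + O(1) = \frac{(1 - 4\alpha^3 + 3\alpha^4)n}{12} + O(1),
\]
as claimed. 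The main obstacle is the careful expansion of each $\binom{f_{i,a}+m_i-1}{m_i}$ factor at $a = \infty$ and the verification that only $i \leq 4$ contribute at the relevant order; once this bookkeeping is in place the rest is routine algebra.
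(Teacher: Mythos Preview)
Your proof is correct and follows essentially the same strategy as the paper: expand $P(a)=\prod_i\binom{f_{i,a}+m_i-1}{m_i}$ as a polynomial in $a$, rewrite $A_{\mathcal{C}_\lambda}(t)$ as $\sum_k c_k\,(1-t)^{n-k}A_k(t)$ (up to a harmless factor of $t$ from the convention for the Eulerian identity), and observe that only $c_n,c_{n-1},c_{n-2}$ enter the mean and variance. The paper then determines these coefficients combinatorially, expanding each factor via Stirling numbers of the first kind and enumerating the multisets $\{d_1,\dots,d_k\}$ with $d_1+\cdots+d_k=im_i-j$ for $j=0,1,2$; you instead read off the same coefficients analytically from the Laurent expansion of each normalized factor at $a=\infty$, noting that $F_{i,a}/a^i=1+O(a^{-i(1-1/p)})$ forces $b_{i,1}=b_{i,2}=0$ for $i\ge 5$. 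Your route yields the clean exact identity $\operatorname{Var}_{\mathcal{C}_\lambda}(d)=\tfrac{n+1}{12}+\tfrac{2r_2}{n(n-1)}-\tfrac{r_1^2}{n^2}$ before passing to asymptotics, whereas the paper carries explicit combinatorial expressions (e.g.\ $\binom{m_1}{2}-m_2$ and the five-case list for the $(1-t)^2$ term) and then extracts the asymptotic variance directly. Both computations agree term by term; the difference is purely one of bookkeeping style.
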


\begin{proof}
From~(\ref{6.0}), we see that
\begin{align*}
	\frac{A_{\mathcal{C}_\lambda}(t)}{\lvert \mathcal{C}_\lambda \rvert}
	&= \frac{(1-t)^{n+1}}{n!} \sum_{a=0}^\infty t^a \prod_{i=1}^n \Bigg( \sum_{k=1}^{m_i} \sum_{d_1,\dots,d_k \mid i} i^{m_i - k} \stirlingi{m_i}{k} \\
	&\hspace{15em} \times \mu\left( \frac{i}{d_1} \right) \cdots \mu\left( \frac{i}{d_k} \right) a^{d_1 + \cdots + d_k} \Bigg),
\end{align*}
where $\stirlingi{n}{k}$ denotes the Stirling numbers of the first kind. The asymptotic mean in~\cite{Fulman1} was calculated by noting that $d_1 + \cdots + d_k = i m_i$ if and only if $k = m_1$ and $d_1 = \cdots d_k = i$, and also noting that $d_1 + \cdots + d_k = i m_i - 1$ if and only if one of the following is true:
\begin{enumerate}
	\item $i=2$, $k=m_2$, and $\left\{ d_1, \dots, d_k \right\} = \left\{ 1, 2, \dots, 2 \right\}$ as multisets.
	\item $i=1$, $k=m_1-1$, and $\left\{ d_1, \dots, d_k \right\} = \left\{ 1, \dots, 1 \right\}$ as multisets.
\end{enumerate}
Similarly, we note that $d_1 + \cdots + d_k = i m_i - 2$ if and only if one of the following is true:
\begin{enumerate}
	\item $i=4$, $k=m_4$, and $\left\{ d_1, \dots, d_k \right\} = \left\{ 2, 4, \dots, 4 \right\}$ as multisets.
	\item $i=3$, $k=m_3$, and $\left\{ d_1, \dots, d_k \right\} = \left\{ 1, 3, \dots, 3 \right\}$ as multisets.
	\item $i=2$, $k=m_2$, and $\left\{ d_1, \dots, d_k \right\} = \left\{ 1, 1, 2, \dots, 2 \right\}$ as multisets.
	\item $i=2$, $k=m_2-1$, and $\left\{ d_1, \dots, d_k \right\} = \left\{ 2, \dots, 2 \right\}$ as multisets.
	\item $i=1$, $k=m_1-2$, and $\left\{ d_1, \dots, d_k \right\} = \left\{ 1, \dots, 1 \right\}$ as multisets.
\end{enumerate}
Hence, it follows that
\begin{align*}
	\frac{A_{\mathcal{C}_\lambda}(t)}{\lvert \mathcal{C}_\lambda \rvert}
	&= \frac{A_n(t)}{n!} + \frac{1-t}{n} \frac{A_{n-1}(t)}{(n-1)!} \left( \binom{m_1}{2} - m_2 \right) \\
	&\quad + \frac{(1-t)^2}{n(n-1)} \frac{A_{n-2}(t)}{(n-2)!} \left( \frac{3m_1 - 1}{4} \binom{m_1}{3} + 3 \binom{m_2}{2} - m_2 \binom{m_1}{2} - m_3 - m_4 \right) \\
	&\quad + (1-t)^3 g(t)
\end{align*}
for some polynomial $g(t)$, from which we can calculate the asymptotic variance of descent numbers to be $\left( 1 - 4\alpha^3 + 3\alpha^4 \right)\frac{n}{12}$.
\end{proof}

\section{Central Limit Theorem for Descents in Conjugacy Classes of $S_n$}

Write $D_{\lambda}$ for the descent number $d(\pi)$ of a permutation $\pi$ which is uniformly chosen from the conjugacy class $\mathcal{C}_\lambda$ of $S_n$. Let us define the normalized random variable~$W_\lambda$ by
\begin{equation*}
    D_\lambda = \frac{n+1}{2} - \frac{m_1^2}{2n} + \sqrt{n} W_\lambda,
\end{equation*}
and denote by $M_\lambda(s) = \mathbb{E}[e^{s W_{\lambda}}]$ the MGF of $W_\lambda$.

Since we now know the asymptotic mean and variance, we expect that the distribution of $W_{\lambda}$ is asymptotically the normal distribution with the zero mean and the variance $\frac{1}{12}(1 - 4\alpha^3 + 3\alpha^4)$ along the limit $m_1 / n \to \alpha$. This is equivalent to showing that $M_{\lambda}$ converges pointwise to the MGF of this normal distribution as $m_1/n \to \alpha$. The following result, Theorem~\ref{main}, concerns a uniform estimate on $M_{\lambda}$ that serves this purpose.

{
\renewcommand{\thetheorem}{1.5}
\begin{theorem}
    Write $\alpha_{\lambda} = m_1/n$. Then, there exists a function $C : \mathbb{R} \to (0, \infty)$ such that
    \begin{equation*}
        \left| M_{\lambda}(s) - \exp\left\{ \frac{s^2}{24} \left( 1 - 4\alpha_{\lambda}^3 + 3\alpha_{\lambda}^4 \right) \right\} \right|
        \leq C(s) \frac{\log^3 n}{\sqrt{n}}
    \end{equation*}
    for any $n \geq 1$ and for any $\lambda \vdash n$.
\end{theorem}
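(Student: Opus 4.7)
The starting point is
$$M_{\lambda}(s) = \exp\!\Bigl(-\tfrac{s}{\sqrt{n}}\bigl(\tfrac{n+1}{2} - \tfrac{m_1^2}{2n}\bigr)\Bigr)\,\frac{A_{\mathcal{C}_{\lambda}}(e^{s/\sqrt{n}})}{|\mathcal{C}_{\lambda}|},$$
which is immediate from the definition of $W_{\lambda}$; the task is a quantitative asymptotic expansion of $A_{\mathcal{C}_{\lambda}}(e^{s/\sqrt{n}})/|\mathcal{C}_{\lambda}|$. Since $t = e^{s/\sqrt{n}}$ lies inside the unit disk when $s \le 0$ and outside when $s > 0$, I would use the series representation (\ref{6.0}) in the first case and (\ref{6.1}) in the second; the two analyses are structurally parallel under $a \leftrightarrow -a$, so I describe the plan only for $s \le 0$ and write $\xi = -s/\sqrt{n} > 0$. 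Set $F_{\lambda}(a) := \prod_i \binom{f_{i,a}+m_i-1}{m_i}$, a polynomial of degree $n$ in $a$ with leading coefficient $1/z_{\lambda}$, where $z_{\lambda} = \prod_i i^{m_i}m_i!$ and $|\mathcal{C}_{\lambda}| = n!/z_{\lambda}$.

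The heart of the argument is an explicit Laurent expansion of $F_{\lambda}$. Setting $F_{\lambda}(a) = (a^n/z_{\lambda})G_{\lambda}(a)$ and expanding each $\log\binom{f_{i,a}+m_i-1}{m_i}$ in powers of $1/a$ via $f_{i,a} = a^i/i + O(a^{i/2})$, one derives
$$\log G_{\lambda}(a) = \frac{\binom{m_1}{2} - m_2}{a} + \frac{q_2(\lambda)}{a^2} + \frac{q_3(\lambda)}{a^3} + \cdots,$$
where the $q_k(\lambda)$ are explicit polynomials in the $m_i$'s whose first two values recover the combinatorics behind the variance calculation in Section~3. The plan then has three steps: (i) truncate (\ref{6.0}) at $a \le C n^{3/2}\log n/|s|$ and bound the tail using $F_{\lambda}(a) \le a^n/z_{\lambda}$ together with the Gaussian decay of $a^n e^{-a\xi}$ away from the saddle $a^{*} = n/\xi$; (ii) on the remaining range, replace $\log G_{\lambda}(a)$ by its cubic Taylor polynomial at $a^{*}$, with remainder controlled by the uniform bound $|q_k(\lambda)| \le C_k n^{k+1}$; and (iii) resum the resulting polynomial-times-geometric sums via the Euler-type identity $(1-e^{-\xi})^{n+1}\sum_{a \ge 1}a^j e^{-a\xi} = e^{-\xi}(1-e^{-\xi})^{n-j}A_j(e^{-\xi})$ from the proof of (\ref{6.2}), together with the expansion $\log(1-e^{-\xi}) = \log\xi - \xi/2 + \xi^2/24 + O(\xi^4)$. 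A direct computation shows that (a) the $1/a$ term of $G_{\lambda}$ evaluated at $a^{*} \sim n^{3/2}/|s|$ contributes $-s\alpha_{\lambda}^2\sqrt{n}/2$, cancelling exactly the divergent $+s\alpha_{\lambda}^2\sqrt{n}/2$ shift in the prefactor; (b) the $1/a^2$ term contributes $-s^2\alpha_{\lambda}^3/6$; (c) the Laplace-method correction from the saddle shift by $-q_1(\lambda)/n$ contributes $+s^2\alpha_{\lambda}^4/8$; and (d) the classical descent CLT for $A_n(e^{-\xi})/n!$ contributes $s^2/24$---summing to the target $s^2(1 - 4\alpha_{\lambda}^3 + 3\alpha_{\lambda}^4)/24$.

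The hard part is uniformity in $\lambda$. Because $m_1$ may range from $0$ to $n$, the coefficients $q_k(\lambda)$ only satisfy the worst-case bound $|q_k(\lambda)| = O(n^{k+1})$, so at the saddle the term $q_k(\lambda)/a^{*k}$ decays only as $n^{1 - k/2}$; reaching the stated rate $\log^3 n/\sqrt{n}$ therefore forces the expansion to be carried through cubic order, and the factor $\log^3 n$ reflects the integration of the cubic Taylor remainder against a Gaussian weight of effective width $\sqrt{\log n}$. Once the uniform estimate of Theorem~\ref{main} is established, the central limit theorem (Theorem~\ref{clt}) follows by Curtiss's theorem (Theorem~\ref{curtiss}) from the pointwise convergence of $M_{\lambda}$.
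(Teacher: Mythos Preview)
Your plan is the paper's saddle-point analysis of \eqref{6.0} and \eqref{6.1} near $a^{*} = n^{3/2}/|s|$, and the four contributions (a)--(d) you identify are exactly what emerge in the paper's calculation. Two pieces of the plan, however, do not work as written.

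First, step (i) truncates only the upper tail $a > Cn^{3/2}\log n/|s|$, but step (ii) then has to cover the whole lower range $1 \le a \lesssim n^{3/2}$, and there it fails: when $a \ll n^{3/2}$ the leading Laurent term $q_1/a \sim m_1^2/(2a)$ can be as large as $n$, so neither the Laurent series nor the cubic Taylor polynomial at $a^{*}$ is controlled. The paper excises $a \le \varepsilon n^{3/2}$ separately and disposes of it with the crude bound $|K_a| \le (4\varepsilon n^{3/2})^n$. (Incidentally, the bound $F_\lambda(a) \le a^n/z_\lambda$ you use for the upper tail is false when $s<0$, since $K_a^{(1)} = \prod_{k<m_1}(a+k) > a^{m_1}$; this is easily patched but does need patching.) Second, the Euler-type identity in step (iii) does not compose with step (ii): after Taylor-expanding $\log G_\lambda$ at $a^{*}$ the summand has the shape $a^n e^{-a\xi}\exp\{c_0 + c_1(a-a^{*}) + \cdots\}$ with $c_0 \sim \alpha_\lambda^2|s|\sqrt{n}/2$ possibly of order $\sqrt{n}$, which is neither polynomial-times-geometric nor expandable in the exponential. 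The paper sidesteps this entirely by replacing the large-range sum with an integral, substituting $a \mapsto y = |s|a/\sqrt{n}$ and then $y = n + \sqrt{n}z$, bounding the density $g_n(z) = (1+z/\sqrt{n})^n e^{-\sqrt{n}z}$ uniformly by a fixed integrable envelope (Lemma~\ref{g-lemma}), and splitting the $z$-integral at $|z| = \mathcal{O}(\log n)$. Your final explanation of the $\log^3 n$ rate matches that last step precisely.
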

\setcounter{theorem}{0}
}

This section is aimed at proving this theorem. In Section~4.1, we develop a series representation of $M_{\lambda}$ along with some preliminary estimates on its coefficients. This reduces the main claim to Proposition \ref{mainprop}. In Section~4.2, we prove this proposition.

\subsection{Series representation of $M_{\lambda}$}

In~\cite{Kim}, estimating the coefficients of \eqref{6.0} played an important role in the computation. Likewise, we need to provide an estimation on the coefficients of \eqref{6.1}. Let $F_{i,a} = i f_{i,a} = \sum_{i \mid d} \mu(i) a^{d/i}$. We first prove the following lemma about $F_{i,a}$ and $F_{i,-a}$.

\begin{lemma}\label{F-lemma}
	Let $a$ and $i$ be positive integers. Then,
	\begin{enumerate}
		\item $(-1)^i F_{i,-a} = F_{i,a} + 2F_{\frac{i}{2},a}\mathbbm{1}_{\left\{\text{ord}_2(i)=1\right\}}$,
		\item (upper bound) $0 \leq F_{i,a} \leq a^i$ and $0 \leq (-1)^i F_{i,-a} \leq a^i + 2a^{\frac{i}{2}}$, and
		\item (lower bound) $(-1)^i F_{i,-a} \geq F_{i,a} \geq a^{\frac{i}{2}} \left( a^{\frac{i}{2}} - \frac{i}{2} \right)$.
	\end{enumerate}
\end{lemma}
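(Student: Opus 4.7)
My plan is to work directly from the explicit Möbius-sum formula
\begin{equation*}
F_{i,a} = i f_{i,a} = \sum_{e \mid i} \mu(i/e) a^e,
\end{equation*}
combined with two facts that will be used repeatedly. The first is the inversion identity $\sum_{e \mid i} F_{e,a} = a^i$, which is equivalent to the displayed formula and has a direct interpretation via Reutenauer's result: partitioning length-$i$ words by the length of their primitive period gives $a^i = \sum_{d \mid i} d\,f_{d,a} = \sum_{d \mid i} F_{d,a}$. The second is the nonnegativity $F_{e,a} \geq 0$, again immediate from that combinatorial interpretation.

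For part (1), I plan to compute
\begin{equation*}
(-1)^i F_{i,-a} - F_{i,a} = \sum_{e \mid i} \mu(i/e) \bigl( (-1)^{i+e} - 1 \bigr) a^e = -2 \sum_{\substack{e \mid i \\ i + e \text{ odd}}} \mu(i/e) a^e
\end{equation*}
and analyze the right-hand side by cases on $\mathrm{ord}_2(i)$. If $i$ is odd, every divisor $e$ is odd, so $i+e$ is even and the sum vanishes. If $4 \mid i$, then any odd $e \mid i$ forces $4 \mid (i/e)$, hence $\mu(i/e)=0$, while the even $e$'s give $i+e$ even, so again the sum vanishes. If $i = 2m$ with $m$ odd, the surviving $e$'s are exactly the odd divisors of $m$, and $\mu(2m/e) = \mu(2)\mu(m/e) = -\mu(m/e)$ (using that $m/e$ is odd); the expression then collapses to $2 \sum_{e \mid m} \mu(m/e) a^e = 2 F_{m,a} = 2 F_{i/2,a}$.

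Parts (2) and (3) should follow quickly. Nonnegativity of every term in $\sum_{e \mid i} F_{e,a} = a^i$ gives $0 \leq F_{i,a} \leq a^i$, and substituting into part (1) yields $0 \leq (-1)^i F_{i,-a} \leq a^i + 2 a^{i/2}$. For the lower bound in (3), the inequality $(-1)^i F_{i,-a} \geq F_{i,a}$ is immediate from (1) and nonnegativity. For the second inequality I plan to rewrite
\begin{equation*}
F_{i,a} = a^i - \sum_{\substack{e \mid i \\ e < i}} F_{e,a} \geq a^i - \sum_{\substack{e \mid i \\ e < i}} a^e,
\end{equation*}
bound each $a^e \leq a^{i/2}$ using $e \leq i/2$ and $a \geq 1$, and observe that the proper divisors of $i$ are distinct positive integers in $\{1,\dots,\lfloor i/2 \rfloor\}$, so there are at most $\lfloor i/2 \rfloor$ of them. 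Multiplying the bounds gives $\sum_{e \mid i,\, e < i} a^e \leq (i/2) a^{i/2}$, which is exactly what is needed.

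The only step requiring genuine care is the parity bookkeeping in (1); once that identity is in hand, the bounds in (2) and (3) are routine consequences of Möbius inversion and the combinatorial positivity of $F_{i,a}$.
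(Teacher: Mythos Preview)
Your proof is correct and follows essentially the same approach as the paper: a case split on $\mathrm{ord}_2(i)$ for part~(1), and M\"obius inversion plus the combinatorial nonnegativity of $F_{e,a}$ for parts~(2) and~(3). The only cosmetic difference is that you isolate the difference $(-1)^i F_{i,-a} - F_{i,a}$ first and then analyze the single resulting sum, whereas the paper writes $i = 2^k q$ and splits the M\"obius sum into a double sum over powers of $2$ and odd divisors before comparing; the underlying case analysis and all the bounds are identical.
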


\begin{proof}[Proof of Lemma~\ref{F-lemma}]
    Part (1) is proven by looking at the definition of $f_{i,a}$. Let us write $i = 2^k q$, where $k$ is a positive integer and $q$ is an odd integer. Then, by the multiplicity of $\mu$, we have
    \begin{equation*}
    	F_{i,a} = \sum_{j=0}^k \sum_{d \mid q} \mu(d) \mu\left( 2^j \right) a^{2^{k-j}\frac{q}{d}},
    \end{equation*}
    and
    \begin{equation*}
    	F_{i,-a} = \sum_{j=0}^k \sum_{d \mid q} \mu(d) \mu\left( 2^j \right) (-a)^{2^{k-j}\frac{q}{d}}.
    \end{equation*}
    
    We divide the computation into three cases.
    \begin{enumerate}
    	\item If $k = 0$, $i = q$ is odd, and so,
    	\begin{equation*}
    		(-1)^i F_{i,-a} = -\sum_{d \mid q} \mu(d) (-a)^{\frac{q}{d}} = \sum_{d \mid q} \mu(d) a^{\frac{q}{d}} = F_{i,a}.
    	\end{equation*}
    	\item If $k = 1$, both the terms for $j = 0,1$ may survive, and
    	\begin{align*}
    		(-1)^i F_{i,-a} &= \sum_{d \mid q} \mu(d) (-a)^{\frac{2q}{d}} + \sum_{d \mid q} \mu(2) \mu(d) (-a)^{\frac{q}{d}} \\
    		&= \sum_{d \mid q} \mu(d) a^{\frac{2q}{d}} + \sum_{d \mid q} \mu(d) a^{\frac{q}{d}} \\
    		&= \sum_{d \mid 2q} \mu(d) a^{\frac{2q}{d}} + 2\sum_{d \mid q} \mu(d) a^{\frac{q}{d}} \\
    		&= F_{i,a} + 2F_{\frac{i}{2}, a}.
    	\end{align*}
    	\item If $k \geq 2$, we have $(-1)^i = 1$ and $(-a)^{2^{k-j}\frac{q}{d}} = a^{2^{k-j}\frac{q}{d}}$ for $j = 0,1$ and $d \mid q$. Hence, by comparing the formula for $F_{i,a}$ and $(-1)^i F_{i,-a}$, we see that they coincide.
    \end{enumerate}
    
    For part (2), we note that $f_{i,a}$ counts certain types of words, and so, $F_{i,a} = if_{i,a} \geq 0$. By using M{\"o}bius inversion formula, we see that $F_{i,a} \leq \sum_{d \mid i} F_{i,a} = a^i$. The second inequality follows from part (1) and the first inequality.
    
    For part (3), note that, by parts (1) and (2), we have $(-1)^i F_{i,-a} \geq F_{i,a}$. The other half of the inequality follows by noting that
    \begin{equation*}
    	F_{i,a} \geq a^i - \sum_{d \mid i, d \neq i} a^d \geq a^i - \frac{i}{2} a^{\frac{i}{2}},
    \end{equation*}
    and so, the lemma is proven.
\end{proof}

In order to utilize both representations \eqref{6.0} and \eqref{6.1} simultaneously, we introduce some auxiliary notations as follows. Given a partition $\lambda \vdash n$ and a non-zero real number $s$, define
\begin{align*}
    K_{a}
    = \prod_{i=1}^{n} K_{a}^{(i)},
    \qquad
    K_{a}^{(i)}
    = \begin{cases}
        \displaystyle \prod_{k=0}^{m_{i}-1} (F_{i,a} + ik), & \text{if } s < 0 \\
        \displaystyle \prod_{k=0}^{m_{i}-1} (-1)^i (F_{i,-a} + ik), & \text{if } s > 0
    \end{cases}
\end{align*}
for $1 \leq i \leq n$ and $a \geq 1$. Strictly speaking, both $K_{a}$ and $K_{a}^{(i)}$ depend on both $s$ and $\lambda$ as well. Since $s$ and $\lambda$ are assumed to be given throughout the computation, however, we suppress them from the notation. Then by \eqref{6.0} and \eqref{6.1}, we obtain the following concise formula
\begin{align*}
	\mathbb{E}[e^{sD_\lambda}]
	= \frac{A_{\mathcal{C}_{\lambda}}(e^{s})}{|\mathcal{C}_{\lambda}|}
	= \left( \frac{e^s-1}{s} \right)^{n+1} \frac{|s|^{n+1}}{n!} \sum_{a=1}^\infty K_a e^{-|s|a}.
\end{align*}
From this, we find that $M_{\lambda}$ is given by
\begin{align*}
    M_{\lambda}(s)
	&= \mathbb{E} \exp\left\{ \frac{s}{\sqrt{n} } \left( D_{\lambda} - \frac{n+1}{2} + \frac{m_1^2}{2n} \right) \right\} \\
	&= \left( \frac{\sinh\left( \frac{s}{2\sqrt{n}} \right)}{\frac{s}{2\sqrt{n}}} \right)^{n+1} \frac{\left( |s|/\sqrt{n} \right)^{n+1}}{n!} \sum_{a=1}^{\infty} K_a \exp \left\{ -\frac{|s|}{\sqrt{n}}a + \frac{m_1^2 s}{2n^{3/2}} \right\} .
\end{align*}
For the sake of simplicity, let us denote
\begin{align*}
    L_a
    = \frac{\left( |s|/\sqrt{n} \right)^{n+1}}{n!} K_a \exp \left\{ -\frac{|s|}{\sqrt{n}}a + \frac{m_1^2 s}{2n^{3/2}} \right\}.
\end{align*}
Note that, for $s$ fixed and $n \to \infty$,
\begin{align*}
    \left( \frac{\sinh\left( \frac{s}{2\sqrt{n}} \right)}{\frac{s}{2\sqrt{n}}} \right)^{n+1}
    = \left( 1 + \frac{s^2}{24n} + \mathcal{O} \left( \frac{1}{n^2} \right) \right)^{n+1}
    = e^{\frac{s^2}{24}} + \mathcal{O}\left( \frac{1}{n} \right),
\end{align*}
where the implicit bounds depend only on $s$. In light of this, we have only to prove the following proposition.

\begin{proposition} \label{mainprop}
Write $\alpha_\lambda = m_1/n$. Then, there exists a function $C: \mathbb{R} \to (0,\infty)$ such that
\[
    \left\lvert \sum_{a=1}^\infty L_a - \exp\left\{ \frac{s^2}{24} \left( -4\alpha_\lambda^3 + 3\alpha_\lambda^4 \right) \right\} \right\rvert \leq C(s) \frac{\log^3 n}{\sqrt{n}}.
\]
\end{proposition}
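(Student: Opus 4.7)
The plan is to view $\sum_{a=1}^\infty L_a$ as a discrete Laplace-type sum whose mass concentrates at $a_* := n/x$, where $x := |s|/\sqrt{n}$, and to evaluate it by a saddle-point / Gaussian quadrature about $a_*$. By the symmetry between the two representations \eqref{6.0} and \eqref{6.1}, it suffices to treat the case $s > 0$, where $K_a^{(1)} = (a)_{m_1}$ is a falling factorial and Lemma~\ref{F-lemma} pins down each $K_a^{(i)}$ for $i \ge 2$ to relative accuracy $\mathcal{O}(a^{-i/2})$. Factor
\[
L_a = L_a^{(0)} Q_a, \qquad L_a^{(0)} := \frac{x^{n+1}}{n!}\, a^n e^{-xa}, \qquad Q_a := \frac{K_a}{a^n}\exp\!\left(\frac{m_1^2 s}{2n^{3/2}}\right).
\]
The factor $L_a^{(0)}$ is the Gamma$(n+1,x)$ density evaluated at integer $a$, with mode $a_*$ and effective width $\sqrt{n}/x = n/|s| \gg 1$; hence $\sum_{a\ge 1} L_a^{(0)} = 1 + \mathcal{O}(1/n)$ by Euler--Maclaurin applied to the smooth Gamma density, and the problem reduces to controlling a weighted expectation of $Q_a$.

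\textbf{Tails and bulk.}
Choose the bulk window $I_n := \{a : |a-a_*| \le a_* (\log n)/\sqrt{n}\}$. From Lemma~\ref{F-lemma}(2) one has $K_a \le \prod_i (a^i + 2 a^{i/2} + i m_i)^{m_i}$, which is uniformly bounded by $(1 + \mathcal{O}(1/\sqrt{n}))^n a^n$ in $\lambda$; combined with Gamma tail bounds this gives $\sum_{a \notin I_n} L_a = \mathcal{O}(e^{-c \log^2 n})$, absorbed into the advertised error. On $I_n$ I parametrize $a = a_*(1+\xi/\sqrt{n})$ with $|\xi| \le \log n$; the Gamma weight becomes a discrete Gaussian of unit variance in $\xi$ up to relative error $\mathcal{O}((|\xi|^3+1)/\sqrt{n})$ (by Stirling applied to $x^{n+1} a^n e^{-xa}/n!$).

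\textbf{Expansion of $\log Q_a$.}
For $i = 1$, the Taylor expansion
\[
\log\frac{(a)_{m_1}}{a^{m_1}} = -\sum_{k=0}^{m_1-1}\Bigl(\frac{k}{a}+\frac{k^2}{2a^2}+\cdots\Bigr) = -\frac{m_1^2}{2a} - \frac{m_1^3}{6 a^2} + \mathcal{O}\!\left(\frac{m_1}{a} + \frac{m_1^4}{a^3}\right)
\]
is the crucial input. The leading term combines with the centering: $m_1^2 s/(2 n^{3/2}) = m_1^2/(2 a_*)$ exactly, so
\[
-\frac{m_1^2}{2a} + \frac{m_1^2 s}{2 n^{3/2}} = \frac{m_1^2}{2 a_*} \cdot \frac{\xi/\sqrt{n}}{1+\xi/\sqrt{n}} = \frac{\alpha_\lambda^2 s}{2}\,\xi + \mathcal{O}\!\left(\frac{\log^2 n}{\sqrt{n}}\right),
\]
while the $-m_1^3/(6 a^2)$ term contributes a constant $-\alpha_\lambda^3 s^2/6 + \mathcal{O}(\log n/\sqrt{n})$. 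For $i \ge 2$, Lemma~\ref{F-lemma}(2)--(3) yield $\log(K_a^{(i)}/a^{i m_i}) = \mathcal{O}(m_i/a^{i/2} + i m_i^2/a^i)$ on $I_n$; summed, the constraint $\sum_{i \ge 2} i m_i \le n$ produces a total $i\ge 2$ contribution of $\mathcal{O}(1/\sqrt{n})$, dominated by $i=2$. Combining,
\[
\log Q_a = \frac{\alpha_\lambda^2 s}{2}\,\xi - \frac{\alpha_\lambda^3 s^2}{6} + \mathcal{O}\!\left(\frac{\log^3 n}{\sqrt{n}}\right) \qquad\text{uniformly on }I_n.
\]

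\textbf{Assembly and main obstacle.}
Summing $L_a^{(0)} Q_a$ over $I_n$ reduces to a Gaussian moment generating function: since the discrete $\xi$--weight converges to $\mathcal{N}(0,1)$ with $\mathcal{O}(1/\sqrt{n})$ Berry--Esseen error, and $\mathbb{E}_{\xi \sim \mathcal{N}(0,1)}[\exp(\alpha_\lambda^2 s\, \xi/2)] = \exp(\alpha_\lambda^4 s^2/8)$, I obtain
\[
\sum_{a=1}^\infty L_a = \exp\!\left(\frac{\alpha_\lambda^4 s^2}{8} - \frac{\alpha_\lambda^3 s^2}{6}\right) + \mathcal{O}\!\left(\frac{\log^3 n}{\sqrt{n}}\right) = \exp\!\left(\frac{s^2(-4\alpha_\lambda^3 + 3\alpha_\lambda^4)}{24}\right) + \mathcal{O}\!\left(\frac{\log^3 n}{\sqrt{n}}\right).
\]
The principal obstacle will be the uniform control of the $i \ge 2$ terms over \emph{all} partitions $\lambda$: extremal shapes such as $\lambda = (2^{n/2})$ or $\lambda = (3^{n/3})$ maximize individual $m_i$'s and require the lower bound in Lemma~\ref{F-lemma}(3) to prevent $\log(K_a^{(i)}/a^{i m_i})$ from blowing up on $I_n$. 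The power $\log^3 n$ in the error reflects the choice of window radius $(\log n)/\sqrt{n}$: taking it smaller makes the Gamma tail too large, while taking it larger inflates the cubic-in-$\xi$ terms from both the Gaussian approximation and the Taylor expansion of $\log Q_a$.
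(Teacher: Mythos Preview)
Your approach is essentially the paper's: both locate the mass at $a_* = n^{3/2}/|s|$, expand $K_a/a^n$ via Lemma~\ref{F-lemma} to isolate $-\tfrac{m_1^2}{2a}\sgn(s) - \tfrac{m_1^3}{6a^2}$, rescale $a = a_*(1+z/\sqrt{n})$, truncate at $|z|\lesssim \log n$, and read off the Gaussian integral $\exp(\alpha_\lambda^4 s^2/8 - \alpha_\lambda^3 s^2/6)$. Your Gamma-density packaging $L_a = L_a^{(0)}Q_a$ is a clean reformulation of the same computation; the paper instead passes to an integral and invokes a monotonicity lemma for $g_n(z)=(1+z/\sqrt{n})^n e^{-\sqrt{n}z}$ to dominate the integrand.

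One step to tighten: the assertion that $K_a \le (1+\mathcal{O}(1/\sqrt{n}))^n a^n$ uniformly in $\lambda$ is false for small $a$. For instance with $\lambda=(1^n)$ and $a=1$ one has $K_a/a^n = m_1!$, so $Q_a$ is not $\mathcal{O}(1)$ on the far-left tail and the Gamma bound on $L_a^{(0)}$ alone does not kill it. The paper handles the range $a \le \varepsilon n^{3/2}$ separately by the crude estimate $|K_a| \le \max\{4n, 4\varepsilon n^{3/2}\}^n$, giving $\sum_{a\le \varepsilon n^{3/2}} |L_a| = \mathcal{O}((4\varepsilon|s|e)^n)$ once $\varepsilon$ is chosen with $4\varepsilon|s|e<1$. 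Your window $I_n$ only covers $a$ within a $\log n/\sqrt{n}$-neighborhood of $a_*$, so you still need such an argument for $a$ below, say, $a_*/2$.
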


\subsection{Proof of Proposition \ref{mainprop}}

We inspect the sum over two ranges -- the small range, where $a \leq \varepsilon n^{3/2}$, and the large range, where $a > \varepsilon n^{3/2}$. Here, $\varepsilon$ is a positive real number chosen to satisfy $4\varepsilon e |s| < 1$. This choice will be explained shortly later, but it is important to note that $\varepsilon$ depends only on $s$.

When invoking asymptotic notation $\mathcal{O}(\cdot)$, it is always assumed that implicit bounds depend only possibly on $s$. This way, we can keep track of uniform estimates. Likewise, we indulge in luxury of changing the meaning of the generic function $C = C(s)$ from line to line, as its exact values are not important to the argument.

\subsubsection{Estimation of the small range.}

If $a \leq \varepsilon n^{3/2}$, we have
\begin{align*}
	\lvert K_a \rvert
	\leq \prod_{i=1}^n \prod_{k=0}^{m_i-1} \left( a^i + 2a^{\frac{i}{2}} + ik \right)
	\leq \prod_{i=1}^n \prod_{k=0}^{m_i-1} \left( 3 \varepsilon n^{\frac{3}{2}} + n \right)
    \leq \max\left\{ 4n, 4\varepsilon n^{\frac{3}{2}} \right\}^n,
\end{align*}
where the last inequality follows from bounding each factor $\left( 3 \varepsilon n^{\frac{3}{2}} + n \right)$ by $4$~times the bigger of $n$ and $\varepsilon n^{3/2}$. This induces the following upper bound of $L_{a}$.
\begin{align*}
    \lvert L_a \rvert
    \leq \frac{\left( |s|/\sqrt{n} \right)^{n+1}}{n!} \max\left\{ 4n, 4\varepsilon n^{\frac{3}{2}} \right\}^n e^{\sqrt{n}|s|/2}.
\end{align*}
Taking a union bound, it follows that
\begin{align*}
    \sum_{a \leq \varepsilon n^{3/2}} L_a
    \leq \varepsilon n^{\frac{3}{2}} \left( \max_{ a \leq \varepsilon n^{3/2} } |L_a| \right)
    \leq \frac{\varepsilon |s|n}{n!} \max\left\{ 4|s|\sqrt{n}, 4\varepsilon|s| n \right\}^n e^{\sqrt{n}|s|/2}.
\end{align*}
In view of the Stirling's approximation $n! \sim \sqrt{2\pi} n^{n+\frac{1}{2}} e^{-n}$, this bound decays to $0$ at least as exponentially fast as $n \to \infty$ by our choice of $\varepsilon$.

\subsubsection{Estimation of the large range.}

Through this section, we assume that $a > \varepsilon n^{\frac{3}{2}}$. In this range, we first check that $K_{a}^{(i)}$, for $2 \leq i \leq n$, behaves almost the same as $a^{im_i}$. More precisely, fix $N_1 = N_1(s)$ so that $\frac{1}{2\varepsilon n^{1/2}} + \frac{1}{\varepsilon^2 n^2} < \frac{1}{2}$ for all $n \geq N_1$, which we assume hereafter. Then, by Lemma \ref{F-lemma}, we find that
\begin{align*}
    0
    < \prod_{k=0}^{m_i-1} \left( 1 - \frac{i}{2a^{i/2}} - \frac{ik}{a^i} \right)
    \leq \frac{K_a^{(i)}}{a^{i m_i}}
    \leq \prod_{k=0}^{m_i-1} \left( 1 + \frac{2}{a^{i/2}} + \frac{ik}{a^i} \right)
\end{align*}
for all $2 \leq i \leq n$. Applying the estimate $\log(1+x) = \mathcal{O}(x)$ for $|x| \leq \frac{1}{2}$, we have
\begin{align*}
	\log \left( \frac{1}{a^{n-m_1}} \prod_{i=2}^n K_a^{(i)} \right)
	&\leq C \sum_{i=2}^{n} \sum_{k=0}^{m_i-1} \left( \frac{i}{a^{i/2}} + \frac{ik}{a^i} \right)
	 \leq C \sum_{i=2}^{n} \sum_{k=0}^{m_i-1} \left( \frac{i}{a} + \frac{ik}{a^2} \right) \\
	&\leq C \left( \frac{n}{a} + \frac{n^2}{a^2} \right)
	 \leq \frac{C}{\sqrt{n}},
\end{align*}
As for $K_a^{(1)}$, we need to consider both $s>0$ and $s<0$ cases. If $s<0$, then by the expansion $1+x = \exp\{\log(1+x)\} = \exp\left\{ x - \frac{1}{2}x^2 + \mathcal{O}(x^3) \right\}$ we get
\begin{align*}
	a^{-m_1} K_a^{(1)}
	&= \prod_{k=0}^{m_1-1} \left( 1 + \frac{k}{a} \right)
    = \exp \left\{ \sum_{k=0}^{m_1-1} \left( \frac{k}{a} - \frac{k^2}{2a^2} + \mathcal{O}\left( \frac{k^3}{a^3} \right) \right) \right\} \\
	&= \exp \left\{ \frac{m_1^2}{2a} - \frac{m_1^3}{6a^2} + \mathcal{O}\left( \frac{1}{\sqrt{n}} \right) \right\}.
\end{align*}
Similarly, for $s>0$, we get
\begin{align*}
	a^{-m_1} K_a^{(1)}
	= \prod_{k=0}^{m_1-1} \left( 1 - \frac{k}{a} \right)
	= \exp \left\{ -\frac{m_1^2}{2a} - \frac{m_1^3}{6a^2} + \mathcal{O}\left( \frac{1}{\sqrt{n}} \right) \right\}.
\end{align*}
Combining the results, we see that, for $s \neq 0$,
\begin{align*}
	K_a
	= a^n \exp \left\{ -\frac{m_1^2}{2a} \sgn(s) - \frac{m_1^3}{6a^2} + \mathcal{O}\left( \frac{1}{\sqrt{n}} \right) \right\},
\end{align*}
where the implicit constant in $\mathcal{O}\left( \frac{1}{n} \right)$ depends only on $\varepsilon$. From this, it easily follows that, for $x \in [a, a+1]$,
\begin{align*}
    K_a \exp\left\{ -\frac{|s|}{\sqrt{n}} a \right\}
	&= \left( 1 + \mathcal{O}\left( \frac{1}{\sqrt{n}} \right) \right) x^n \exp \left\{ -\frac{m_1^2}{2x} \sgn(s) - \frac{m_1^3}{6x^2} - \frac{|s|}{\sqrt{n}} x \right\}
\end{align*}
and hence,
\begin{align*}
    \sum_{a > \varepsilon n^{3/2}} L_a
    &= \left( 1 + \mathcal{O} \left( \frac{1}{\sqrt{n}} \right) \right)
    \exp\left\{ \frac{m_1^2 s}{2n^{3/2}} \right\} \frac{\left( |s|/\sqrt{n} \right)^{n+1}}{n!} \\
    &\hspace{1.5em} \times  \int_{\varepsilon n^{3/2}}^{\infty} x^n \exp \left\{ -\frac{m_1^2}{2x} \sgn(s) - \frac{m_1^3}{6x^2} - \frac{|s|}{\sqrt{n}} x \right\} \, dx.
\end{align*}
Applying the substitution $y = \frac{|s|}{\sqrt{n}} x$, followed by $y = n + \sqrt{n} z$, the above integral becomes
\begin{align*}
    &\exp\left\{ \frac{m_1^2 s}{2n^{3/2}} \right\} \frac{\left( |s|/\sqrt{n} \right)^{n+1}}{n!} \int_{\varepsilon n^{3/2}}^{\infty} x^n \exp \left\{ -\frac{m_1^2}{2x} \sgn(s) - \frac{m_1^3}{6x^2} - \frac{|s|}{\sqrt{n}} x \right\} \, dx \\
    &\hspace{1em} = \exp\left\{ \frac{m_1^2 s}{2n^{3/2}} \right\} \frac{1}{n!} \int_{\varepsilon |s| n}^{\infty} y^n \exp \left\{ -\frac{m_1^2 s}{2\sqrt{n} y} - \frac{m_1^3 s^2}{6ny^2} - y \right\} \, dy \\
    &\hspace{1em} = \frac{n^{n+\frac{1}{2}} e^{-n}}{n!} \int_{\mathbb{R}} g_n(z) \exp \left\{ \frac{\alpha_{\lambda}^2 s }{2 ( 1 + \frac{z}{\sqrt{n}} )} z - \frac{\alpha_{\lambda}^3 s^2}{6 ( 1 + \frac{z}{\sqrt{n}} )^2} \right\} \, dz,
\end{align*}
where $\alpha_{\lambda} = m_1 / n$ and $g_n : \mathbb{R} \to \mathbb{R}$ is defined by
\begin{align*}
    g_n(z) = \begin{cases}
        \displaystyle \left( 1 + \frac{z}{\sqrt{n}} \right)^n e^{-\sqrt{n} z}, & \text{if } z > -(1-\varepsilon|s|)\sqrt{n} \\
        0, & \text{otherwise}.
    \end{cases}
\end{align*}

Now we aim at estimating the last integral. First, by the Stirling's approximation we have $\frac{n^{n+\frac{1}{2}} e^{-n}}{n!} = \frac{1}{\sqrt{2\pi}} + \mathcal{O}\left(\frac{1}{n} \right)$. Next, we claim the following lemma.

\begin{lemma} \label{g-lemma}
    Let $N \geq 1$ be arbitrary. Then, for any $n \geq N$, we have
    \begin{align*}
    	g_n(z)
	    \leq \begin{cases}
	        g_N(z), & \text{if $z \geq 0$} \vspace{0.5em} \\
	       e^{-\frac{1}{2}z^2}, & \text{if $z < 0$}
        \end{cases}.
    \end{align*}
\end{lemma}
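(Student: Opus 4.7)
The plan is to take logarithms and work with $h_n(z) = \log g_n(z) = n\log(1 + z/\sqrt{n}) - \sqrt{n}\, z$ on the region $z > -(1-\varepsilon|s|)\sqrt{n}$, outside of which $g_n$ vanishes and both inequalities are trivially satisfied. Substituting $w = z/\sqrt{n}$ rewrites $h_n(z) = n\eta(w)$ with $\eta(w) = \log(1+w) - w$. Under this reduction, the first inequality becomes a monotonicity statement for the map $n \mapsto h_n(z)$ on $z \geq 0$, while the second collapses to a single pointwise estimate $\eta(w) \leq -w^2/2$ on $w \in (-1,0)$.

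For the case $z \geq 0$, I would treat $n$ as a continuous parameter and compute directly
\begin{equation*}
    \frac{\partial h_n(z)}{\partial n} = \log(1+w) - \frac{w(2+w)}{2(1+w)} = \rho(w), \qquad w = z/\sqrt{n}.
\end{equation*}
A quick computation shows $\rho(0) = 0$ and $\rho'(w) = -w^2/[2(1+w)^2] \leq 0$ for $w > -1$, so $\rho$ is non-positive on $[0, \infty)$. Consequently $n \mapsto h_n(z)$ is non-increasing for each fixed $z \geq 0$, and exponentiating yields $g_n(z) \leq g_N(z)$ whenever $n \geq N$.

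For the case $z < 0$, I may assume $w = z/\sqrt{n} \in (-1, 0)$, since $g_n(z) = 0$ otherwise. After dividing by $n$, the target inequality becomes $\log(1+w) \leq w - w^2/2$. I would set $f(w) = w - w^2/2 - \log(1+w)$ and observe that $f(0) = 0$ and $f'(w) = -w^2/(1+w) \leq 0$ on $(-1, \infty)$, so $f$ is non-increasing there and hence $f(w) \geq 0$ for $w \leq 0$, which is exactly the claim. I anticipate no real obstacle in either case: once the substitution $w = z/\sqrt{n}$ is made, both halves of the lemma reduce to one-variable calculus driven by the same elementary identity of the form $(\,\cdot\,)' = -w^2/(\text{positive})$; the only point requiring a touch of care is recognizing that the correct monotonicity for $z \geq 0$ is in $n$, whereas for $z < 0$ one must compare $\eta(w)$ against its quadratic Taylor polynomial directly.
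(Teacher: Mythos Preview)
Your proof is correct. Both you and the paper take logarithms and treat $n$ as a continuous parameter, but the executions differ in detail. The paper computes the second derivative $\partial^2 h/\partial n^2 = z^3 / [4n^{3/2}(z+\sqrt n)^2]$ and combines it with $\lim_{n\to\infty} \partial h/\partial n = 0$ to deduce the sign of $\partial h/\partial n$; it then uses monotonicity in $n$ for \emph{both} cases, bounding $g_n(z)$ by $g_N(z)$ when $z\geq 0$ and by the limit $\lim_{n'\to\infty} g_{n'}(z) = e^{-z^2/2}$ when $z<0$. You instead evaluate $\partial h_n/\partial n = \rho(z/\sqrt n)$ directly via the substitution $w=z/\sqrt n$ and establish $\rho \leq 0$ on $[0,\infty)$ by differentiating once in $w$, which is a touch cleaner; and for $z<0$ you bypass monotonicity in $n$ altogether in favor of the one-variable inequality $\log(1+w) \leq w - w^2/2$ on $(-1,0]$. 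Your route avoids second partials in $n$ and the limiting argument, at the modest cost of handling the two signs of $z$ less symmetrically.
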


\begin{proof}[Proof of Lemma~\ref{g-lemma}]
    Consider the function $h(n, z) = \log \left( \left( 1 + \frac{z}{\sqrt{n}} \right)^n e^{-\sqrt{n} z} \right)$ on $z > -\sqrt{n}$. By direct computation, we find that
    \begin{align*}
        \frac{\partial^2 h}{\partial n^2} = \frac{z^3}{4n^{3/2} \left( z + \sqrt{n} \right)^2}
        \qquad \text{and} \qquad
        \lim_{n\to\infty} \frac{\partial h}{\partial n} = 0.
    \end{align*}
   So, it follows that $\frac{\partial h}{\partial n} \leq 0$ if $z \geq 0$, and $\frac{\partial h}{\partial n} \geq 0$ if $z < 0$. Hence, for $n \geq N$ and $z \geq 0$, we obtain $g_n(z) = \exp\{h(n, z)\} \leq \exp\{h(N, z)\} = g_N(z)$. Similarly, when $z < 0$ we have $g_n(z) \leq \lim_{n'\to\infty} \exp\{h(n', z)\} = e^{-\frac{1}{2}z^2}$.
\end{proof}

Now, pick $N_2 = N_2(s)$ so that $N_2 \geq \max\{N_1, s^2\}$ (recall that we introduced $N_1$ at the beginning of the estimation in the large range.) Writing $\tilde{g}_n$ for the integrand
\begin{align*}
    \tilde{g}_n(z) = g_n(z) \exp \left\{ \frac{\alpha_{\lambda}^2 s }{2 ( 1 + \frac{z}{\sqrt{n}} )} z - \frac{\alpha_{\lambda}^3 s^2}{6 ( 1 + \frac{z}{\sqrt{n}} )^2} \right\},
\end{align*}
the above Lemma \ref{g-lemma} provides the following bound
\begin{align*}
    g_n(z) \leq
	\begin{cases}
	    g_{N_2}(z) e^{\frac{|s|}{2} z} & \text{if $z \geq 0$} \vspace{0.5em} \\
	    e^{-\frac{1}{2}z^2+\frac{|s|}{2}z} & \text{if $z < 0$}
    \end{cases}
\end{align*}
for all $z \in \mathbb{R}$ and for all $n \geq N_2$. The specific detail of this bound is not important, however, and we only need to note that this decays exponentially fast. To be precise, there exist constants $C > 0$ and $c > 0$, which depend only on $s$, such that
\begin{align*}
     \max \left\{ g_{N_2}(|z|) e^{\frac{|s|}{2} |z|}, e^{-\frac{1}{2}z^2+\frac{|s|}{2}|z|} \right\}
     \leq C e^{-c |z|}
\end{align*}
for all $z \in \mathbb{R}$. Now, to estimate the integral of $\tilde{g}_n$, we split this into two parts
\begin{align*}
    \int_{\mathbb{R}} \tilde{g}_n(z)
    = \int_{|z| \leq \frac{\log n}{2c}} \tilde{g_n}(z) \, dz
    + \int_{|z| > \frac{\log n}{2c}} \tilde{g_n}(z) \, dz.
\end{align*}
The latter integral is easily estimated by direct computation.
\begin{align*}
    \int_{|z| > \frac{\log n}{2c}} \tilde{g_n}(z) \, dz
    \leq 2 \int_{\frac{\log n}{2c}}^{\infty} Ce^{-cz} \, dz
    = \frac{2C}{c\sqrt{n}}.
\end{align*}
For the first integral, we have
\begin{align*}
    \int_{|z| \leq \frac{\log n}{2c}} \tilde{g_n}(z) \, dz
    &=  \int_{|z| \leq \frac{\log n}{2c}} \exp\left\{ - \frac{1}{2}z^2 + \frac{\alpha_{\lambda}^2 s}{2}z - \frac{\alpha_{\lambda}^3 s^2}{6} + \mathcal{O}\left( \frac{\log^3 n}{\sqrt{n}} \right) \right\} \, dz \\
    &= \left( 1 + \mathcal{O} \left( \frac{\log^3 n}{\sqrt{n}} \right) \right) \int_{\mathbb{R}} \exp\left\{ - \frac{1}{2}z^2 + \frac{\alpha_{\lambda}^2 s}{2}z - \frac{\alpha_{\lambda}^3 s^2}{6} \right\} \, dz \\
    &\hspace{2em} + \mathcal{O} \left( \int_{|z| > \frac{\log n}{2c}} C e^{-c|z|} \, dz \right) \\
    &= \sqrt{2\pi} \exp \left\{\frac{s^2}{24} \left( -4\alpha_{\lambda}^3 + 3\alpha_{\lambda}^4 \right) \right\} + \mathcal{O} \left( \frac{\log^3 n}{\sqrt{n}} \right).
\end{align*}
Combining altogether, we obtain, for $n \geq N_2$,
\begin{align*}
    \sum_{a > \varepsilon n^{3/2}} L_a
    = \exp \left\{\frac{s^2}{24} \left( -4\alpha_{\lambda}^3 + 3\alpha_{\lambda}^4 \right) \right\} + \mathcal{O} \left( \frac{\log^3 n}{\sqrt{n}} \right).
\end{align*}

Together with the exponential decay of $\sum_{a \leq \varepsilon n^{3/2}} L_a$ proved in the previous section, the desired proposition follows by revealing the implicit bound $C$ and then making it larger, if needed, so that the inequality is also true for $n < N_2$.

This concludes the proof of Proposition~\ref{mainprop} and, in turn, Theorem~\ref{main}. Combining Theorem~\ref{main} and Theorem~\ref{curtiss} yields the desired central limit theorem, Theorem~\ref{clt}.

\section*{Acknowledgments}

The authors would like to thank Jason Fulman and Persi Diaconis for suggesting the original problem.
Sangchul Lee's research has been partially supported by the NSF award DMS-1712632.

\bibliographystyle{amsplain}

\end{document}